\let\c@author\relax
\DeclareMathOperator{\Val}{Val}
\DeclareMathOperator{\ord}{ord}
\DeclareMathOperator{\Spec}{Spec}
\DeclareMathOperator{\Zar}{Zar}
\DeclareMathOperator{\InitialForm}{in}
\DeclareMathOperator{\AssociatedGraded}{gr}
\newtheorem{theorem}{Theorem}
\newtheorem{corollary}[theorem]{Corollary}
\newtheorem{lemma}[theorem]{Lemma}
\theoremstyle{definition}
\newtheorem{example}[theorem]{Example}
\newtheorem{question}[theorem]{Question}
\numberwithin{theorem}{section}
\begin{document}

\begin{frontmatter}

\title{Connectedness and integrally closed local overrings of two-dimensional regular local rings}

\author[Heinzer]{William Heinzer}
\address[Heinzer]{Department of Mathematics, Purdue University, West
Lafayette, Indiana 47907-1395 U.S.A.}
\ead{heinzer@purdue.edu}

\author[Loper]{K.~Alan Loper}
\address[Loper]{Department of Mathematics, Ohio State University -- Newark,  Newark, OH 43055}
\ead{lopera@math.ohio-state.edu}

\author[Olberding]{Bruce Olberding\fnref{grant} } 
\address[Olberding]{Department of Mathematical Sciences, New Mexico State University,
 Las Cruces, NM 88003-8001 U.S.A.}
 \ead{olberdin@nmsu.edu}

\author[Toeniskoetter]{Matthew Toeniskoetter} 
\address[Toeniskoetter]{Department of Mathematics and Statistics, Oakland University, Rochester, MI 48309-4479}
 \ead{toeniskoetter@oakland.edu}

\begin{abstract}
Let $D$ be a two-dimensional regular local ring. 
We prove there is a one-to-one correspondence between closed connected sets in the space of valuation overrings of $D$ that dominate $D$ and the integrally closed local overrings of $D$ that are not essential valuation rings   or divisorial valuation rings of $D$. 

\end{abstract}

\fntext[grant]{The author was supported by NSF grant DMS-2231414.}

\begin{keyword}
Regular local ring \sep
local ring \sep
valuation ring \sep
Zariski-Riemann space \sep
quadratic transform   

\MSC 13A18 \sep 13H05

\end{keyword}

\end{frontmatter}

\section{Introduction}

In \cite{HLOT}, we proved the following theorem, which can be viewed as a lifting of Zariski's Connectedness Theorem to spaces of valuation rings. In the theorem, we do not need to assume anything about $D$ other than that it is a local integral domain, nor do we need to assume $F$ is the quotient field of $D$. We denote by  $\Val_F(D)$  the set of valuation rings of the field $F$ that dominate  the local ring $D$ (i.e., the maximal ideal   of $V$ contains the maximal ideal   of $D$), and whose topology is the Zariski topology with basic open sets given by $$\{V \in \Val_F(D):x_1,\ldots,x_n \in V\}, {\mbox{ where }} x_1,\ldots,x_n \in F.$$

\begin{theorem}{\em \cite[Theorem 1.1]{HLOT}}
\label{VCT} \label{is local}
Let $F$ be a field and let $D$ be a local subring of $F$.
A subring $A$ of $F$ dominating $D$ is local, residually algebraic over $F$, and integrally closed in $F$ if and only if there is a closed and connected subspace $Z$ of $\Val_F(D)$ such that $A$ is the intersection of the rings in $Z$. In this case, $Z$ may be taken to be $\Val_F(D)$. 
\end{theorem}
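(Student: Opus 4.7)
The plan is to prove the biconditional in two steps, relating the topology of $Z$ to the ring structure of $A = \bigcap_{V \in Z} V$ via the contraction map $\pi : Z \to \Spec(A)$, $\pi(V) = \mathfrak{m}_V \cap A$.

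For the forward direction, let $Z$ be closed and connected in $\Val_F(D)$. Integral closedness of $A$ in $F$ is automatic since any intersection of valuation rings of $F$ is integrally closed in $F$, and domination of $D$ is inherited from each $V \in Z$. The crucial step is locality: if $A$ had distinct maximal ideals $\mathfrak{m}_1, \mathfrak{m}_2$, the goal would be to produce a clopen partition of $Z$ separating valuation rings with $\pi(V) \subseteq \mathfrak{m}_1$ from those with $\pi(V) \subseteq \mathfrak{m}_2$, contradicting connectedness. The building blocks are the basic open subsets $\{V \in Z : a^{-1} \in V\}$ for $a \in A$ lying in one maximal ideal but not the other; these are exactly the valuation rings in which $a$ is a unit, and the task is to combine such sets into a proper clopen splitting of $Z$. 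Residual algebraicity of $A$ over $D$ should then follow from locality together with the domination hypothesis and the residue field structure of each $V \in Z$ over $D$.

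For the converse, take $Z = \{V \in \Val_F(D) : V \supseteq A\}$. Since $A$ is integrally closed in $F$, one has $A = \bigcap_{V \in Z} V$ (an integrally closed subring of $F$ equals the intersection of its valuation overrings in $F$), and $Z$ is closed in $\Val_F(D)$ by a routine argument on the Zariski topology. The main obstacle is connectedness of $Z$. Classical Zariski Connectedness does not apply directly, as it requires finite type or proper birational hypotheses absent here. I would proceed by contradiction: a clopen partition $Z = Z_1 \sqcup Z_2$ produces overrings $A_i = \bigcap_{V \in Z_i} V$ with $A = A_1 \cap A_2$, and the task becomes showing that locality and residual algebraicity of $A$ are incompatible with such a decomposition --- for instance, by extracting an element of $F$ that is a unit in one $A_i$ but gives rise to a second maximal ideal of $A$. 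The addendum that $Z$ may be taken to be all of $\Val_F(D)$ should then follow as the special case when $A$ equals the integral closure of $D$ in $F$, and would simultaneously record that $\Val_F(D)$ itself is closed and connected. This connectedness argument is where I expect the technical heart of the proof to reside, and where I would concentrate my effort.
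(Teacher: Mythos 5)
Note first that the paper does not prove this statement; it is imported verbatim as \cite[Theorem 1.1]{HLOT}, so there is no in-paper proof to compare your sketch against. Two small misreadings of the statement are worth flagging: ``residually algebraic over $F$'' must be read as ``over $D$,'' and the addendum ``$Z$ may be taken to be $\Val_F(D)$'' must be read as ``$\Val_F(A)$'' (this is how the paper uses it later, e.g.\ in the proofs of Theorems~\ref{pre vacant} and~\ref{nullstellensatz}). Your interpretation of the addendum as concerning only the special case $A =$ integral closure of $D$ is therefore incorrect: the addendum asserts, for \emph{every} qualifying $A$, that $\Val_F(A)$ itself is a valid closed connected representing set.

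The sketch has genuine gaps exactly where the work should be. In the forward direction, the claim that residual algebraicity ``should follow from locality together with the domination hypothesis and the residue field structure of each $V \in Z$ over $D$'' cannot be right as stated: individual members of $Z$ may be residually \emph{transcendental} over $D$ (the divisorial valuation rings of~(2.1)), so the residue-field data of the $V\in Z$ alone does not force $A(Z)$ to be residually algebraic. What forces it is the Zariski \emph{closedness} of $Z$, which makes $Z$ contain, below any residually transcendental member, valuation rings that are residually algebraic over $D$; your plan never invokes closedness for this step, although it is precisely the hypothesis that matters. The locality argument is also unfinished at the crucial point: the sets $\{V\in Z : a^{-1}\in V\}$ are Zariski open but not closed, so producing a clopen partition from some $a\in\mathfrak m_1\setminus\mathfrak m_2$ needs a further idea (such as passing to the patch topology, or controlling the Jacobson radical $\bigcap_{V\in Z}\mathfrak m_V$), and the partition you suggest need not even cover $Z$ since the contractions $\mathfrak m_V\cap A$ are not all maximal ideals of $A$. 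In the converse, you correctly identify connectedness as the technical heart but do not supply an argument; note additionally that $Z = \{V\in\Val_F(D) : V\supseteq A\}$ is in general larger than $\Val_F(A)$ (containment is weaker than domination), and showing either of these sets is Zariski closed is not ``a routine argument'' --- it already requires the residual algebraicity of $A$, so the logic must be organized carefully to avoid circularity.
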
 

Thus  the integrally closed local rings between $D$ and $F$ that dominate $D$ and are residually algebraic over $D$ are precisely the intersections of rings from closed connected subsets of $\Val_F(D)$. In this article, we pursue these ideas in an important context for birational algebra and geometry in which $D$ is a two-dimensional regular local ring and $F$ is its quotient field.
 In this case we prove a tighter version of Theorem~\ref{is local}, which can be viewed as a kind of Nullstellensatz for integrally closed local overrings of $D$. Where this theorem differs from Theorem~\ref{is local} is that it establishes a one-to-one correspondence between closed connected sets of $\Val(D):=\Val_F(D)$ and integrally closed local overrings that are not essential valuation rings or divisorial valuation rings of $D$. Thus part of the added strength in the following theorem is that it shows 
 that except in a couple of trivial cases, 
 every integrally closed local  overring $A$ of $D$
is not only  represented as an intersection of valuation rings from a closed and connected set in $\Val(D)$ but that there is only one choice of such a representing set, namely 
the collection of valuation rings in $\Val(D)$ that dominate $A$. 
 For a nonempty subset $Z$ of $\Val(D)$, we denote by $A(Z)$ the intersection of the rings in $Z$. 

\begin{theorem} \label{nullstellensatz} With $D$ a two-dimensional regular local ring, the mappings 
\begin{center} $Z \mapsto A(Z)$ and $A \mapsto \Val(A)$ 
\end{center} 
define  a one-to-one correspondence between closed connected subsets of $\Val(D)$ and  integrally closed local overrings of $D$ that are not essential or divisorial valuation rings of $D$. 
\end{theorem}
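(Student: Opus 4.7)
The plan is to reduce to Theorem~\ref{is local} by classifying which integrally closed local overrings of $D$ satisfy its hypotheses, and then to use closedness in $\Val(D)$ to secure the required uniqueness.

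First, I would show that an integrally closed local overring $A\neq F$ of $D$ either is essential or dominates $D$. The contraction $P := m_A \cap D$ is a nonzero prime of $D$ with $D_P \subseteq A$; if $P$ has height one, then $D_P$ is a DVR and no integrally closed local ring sits strictly between a DVR and its fraction field, so $A = D_P$ is essential. Next, if $A$ dominates $D$ and is not a divisorial valuation ring of $D$, I would show $A$ is residually algebraic over $D$. Any valuation ring $V$ dominating $A$ also dominates $D$, so Abhyankar's inequality gives $\mathrm{rank}(V) + \TranscendenceDegree_{D/m_D}(V/m_V)\le 2$. If $A/m_A$ had transcendence degree one over $D/m_D$, every such $V$ would have rank one with residual transcendence degree one, i.e., be divisorial; a standard intersection argument in dimension two would then force $A$ itself to be divisorial, contrary to assumption. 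Hence $A$ is residually algebraic, and Theorem~\ref{is local} yields that $\Val(A)$ is a closed connected subset of $\Val(D)$ with $A(\Val(A)) = A$.

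For the reverse direction, given a closed connected $Z\subseteq \Val(D)$, Theorem~\ref{is local} provides that $A:=A(Z)$ is a local integrally closed overring that is residually algebraic over $D$ and dominates $D$; in particular, $A$ is not essential. To rule out $A$ being divisorial, suppose $A = V$ with $V$ divisorial. Every $W\in Z$ contains the rank-one DVR $V$, forcing $W = V$, so $Z = \{V\}$. But composing $V$ with a DVR of the residue field $V/m_V$ centered over $k = D/m_D$ (such a DVR exists because $V/m_V$ is a function field of transcendence degree one over $k$) yields a rank-two valuation ring $W'\in\Val(D)$ strictly contained in $V$, hence $W'\in \overline{\{V\}}\setminus\{V\}$, contradicting that $Z = \{V\}$ is closed.

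The main obstacle is establishing that the two assignments are mutually inverse, specifically the identity $\Val(A(Z)) = Z$ for every closed connected $Z$. The containment $Z \subseteq \Val(A(Z))$ reduces to showing each $V \in Z$ dominates $A(Z)$, which I would prove by arguing the locus of such $V$ is clopen in $Z$ and invoking connectedness together with non-emptiness. For the reverse inclusion, I would proceed by contradiction: any $V\in\Val(A(Z))\setminus Z$ is separated from $Z$ by a basic open $\{W : x_1,\ldots,x_n\in W\}$, forcing each $W\in Z$ to omit some $x_i$. A patching argument that exploits the organization of $\Val(D)$ for two-dimensional regular local $D$ as a tree governed by iterated quadratic transforms should then produce an element of $A(Z) = \bigcap_{W \in Z} W$ failing to lie in $V$, contradicting $V \supseteq A(Z)$. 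This patching step, which depends on structural features of $\Val(D)$ specific to dimension two, is the crux of the proof.
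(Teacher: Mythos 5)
Your first two paragraphs are essentially the same reduction the paper makes: integrally closed local overrings of $D$ either dominate $D$ or are essential valuation rings, and the ones that dominate and are not divisorial are residually algebraic, so Theorem~\ref{is local} applies to both assignments. (Your separate topological argument to rule out $A(Z)$ being divisorial is actually redundant --- once Theorem~\ref{is local} tells you $A(Z)$ is residually algebraic, it cannot be divisorial, since divisorial valuation rings are residually transcendental by (2.1).) Your treatment of the inclusion $Z \subseteq \Val(A(Z))$ also differs in style: the paper argues directly that each $U \in Z$ dominates $A(Z)$ by looking at $(A(Z)+\mathfrak{M}_U)/\mathfrak{M}_U$ when $U$ is residually algebraic and reducing to that case when $U$ is residually transcendental (using that $Z$ is closed so the minimal valuation rings under $U$ are in $Z$), whereas you propose an unverified clopen-locus-plus-connectedness argument. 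The paper's route is the cleaner one to actually write down.

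The genuine gap is the inclusion $\Val(A(Z)) \subseteq Z$, which you yourself flag as ``the crux.'' Your proposed strategy --- take $V$ not in $Z$, separate it by a Zariski basic open, and by a ``patching argument'' manufacture an element of $A(Z)$ outside $V$ --- is the right goal but is not an argument, and there is no elementary way to close it. The difficulty is precisely that $Z$ might a priori be a strictly smaller closed representation of $A(Z)$ than $\Val(A(Z))$; ruling this out is not a local finite-type computation along the quadratic tree. The paper handles this by proving Theorem~\ref{pre vacant}: for $A$ an integrally closed local overring of $D$, the \emph{only} patch closed subset of $\Val(A)$ intersecting to $A$ is $\Val(A)$ itself. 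That theorem in turn requires Theorem~\ref{two dim} (every $Z$ with $A(Z)$ local is connected), which is proved via Theorem~\ref{Henselian lemma} (an application of Zariski's Connectedness Theorem to fibers of domination maps between projective models), which in turn rests on the irredundance machinery of Section 3 (Theorem~\ref{irredundant} and Theorem~\ref{unique essential}, the latter proved by delicate approximation of initial forms along the quadratic tree). None of this can be condensed into a short ``patching'' step, and without it your proof is missing the key ingredient.
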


The proof of this theorem relies on irredundance properties of intersections of local rings in nonsingular projective models over $D$. We develop these properties in Section 3 and prove, among other things, that the intersection of any set of incomparable two-dimensional regular local overrings of $D$ is irredundant.  

We prove Theorem~\ref{nullstellensatz} in Section~4, where we also
 use this correspondence to describe some strong properties of the space of valuation overrings of integrally closed local overrings of $D$.  For example, it is proved that if 
  ${Z}$ is a  subset of ${\rm Val}(D)$ such that $A({Z})$ is local, then ${Z}$ is   connected (Theorem~\ref{two dim}) and that if
 $A$ is an integrally closed local overring of $D$ that dominates $D$, the only patch closed representation of $A$ in $\Val(D)$ is $\Val(A)$ (Theorem~\ref{pre vacant}). As a consequence, we obtain that  every one-dimensional integrally closed local overring $A$ of $D$ is vacant (Corollary~\ref{vacant cor}), meaning that up to patch closure there is only one way to write $A$ as an intersection of valuation overrings.
 
\section{Preliminaries} 

{\it For the rest of the article, let $D$ denote a two-dimensional regular local ring with quotient field $F$.} 
In this section we collect some definitions and observations that will be needed in the next sections. 

\medskip

 \noindent {\bf (2.1)} 
 A valuation overring $V$ of $D$ that is not the quotient field  of $D$ has Krull dimension $1$ or $2$. If  $V$ has Krull dimension $2$, then $V$ is discrete, while if $V$ has Krull dimension $1$, then the value group of $V$ has rational rank $1$ or  $2$ \cite[Theorem~1]{MR0082477}.  If $V$ is a DVR whose residue field is transcendental over the residue field of $D$, then $V$ is a {\it divisorial valuation ring} of $D$. If  $V$ is  a localization of $D$ at a height one prime ideal of $D$, then $V$ is an {\it essential valuation ring} of $D$.
 The integrally closed local overrings of $D$ that dominate $D$ and are residually transcendental over $D$ are precisely the divisorial valuation rings of $D$ \cite[Theorem~1]{MR0082477}.

\medskip

 \noindent {\bf(2.2)} 
 The {\it Zariski-Riemann space} $\Zar(D)$ of  $D$ is the set of valuation overrings of $D$ with
the Zariski topology, which is defined as for $\Val(D)$ in the introduction but with quantification over $\Zar(D)$ rather than $\Val(D)$, so that  $\Val(D)$ is a subspace of $\Zar(D)$.   
 The Zariski topology on $\Zar(D)$ (and hence also on its subspace $\Val(D)$) can be refined to a zero-dimensional Hausdorff topology called the {\it patch topology,} which has as a basis of clopen sets the sets of the form $$\{V \in \Zar(D):x_1,\ldots,x_n \in V, y \not \in V\}, {\mbox{ where }} x_1,\ldots,x_n,y\in F.$$
 If $Z$ is a patch closed subset of $\Zar(D)$, then it follows as in 
 \cite[Tag 0903]{stacks-project} that
 the Zariski closure $\overline{Z}$ of $Z$ in $\Zar(D)$  is $$\overline{Z} = \{V \in \Zar(D):V \subseteq U {\mbox{ for some }} U \in Z\}.$$

 \noindent {\bf (2.3)} If $V$ is a divisorial valuation ring of $D$ as in (2.1), then there are infinitely many minimal valuation overrings of $D$ contained in $V$.
Furthermore, $V$ is a patch limit point of every infinite subset of these; see for example \cite[Corollary~3.4]{MR3299704}.

\medskip

 \noindent {\bf(2.4)} A {\it local quadratic transform} of $D$ is an overring  of $D$ of the form $D[x/y]_P$, where $x,y$ generate the maximal ideal ${\mathfrak{m}}$ of $D$ and $P$ is a prime ideal of $D[x/y]$ that contains  ${\mathfrak{m}}$. A local quadratic transform of~$D$ is  a regular local  
ring of Krull dimension $1$ or $2$. If $D \subseteq D_1 \subseteq \cdots \subseteq D_n$ are local rings with each $D_{i+1}$ a local quadratic transform of $D_{i}$, we say $D_{i+1}$ is an  {\it iterated local quadratic transform} of $D$.
 Every two-dimensional regular local overring of $D$ is an iterated local quadratic transform of $D$ \cite[Theorem 3]{MR0082477}, and so the set of two-dimensional regular local overrings of $D$ is a tree with respect to set inclusion.
 We denote this tree, the {\it quadratic tree of $D$}, by $Q(D)$.
 The tree $Q(D)$ can be stratified as a union of sets $Q_i(D)$, where for each $i$, $Q_i(D)$ is the set of regular local rings in $Q(D)$ that are obtained by $i$ iterated local quadratic transforms.
 We denote by $Q_{\leq n}(D)$ the union of the sets $Q_i(D)$, $i \leq n$. 
 For more on the tree $Q(D)$, see \cite{MR0340250,MR4108340}.
 A valuation overring of $D$ dominates $D$ if and only if it is a union of a chain in $Q(D)$ \cite[Lemma~12]{MR0082477}.  

\section{Irredundant intersections of regular local overrings} 

In this section we consider intersections of two-dimensional regular local overrings of $D$, with special emphasis on the irredundance of intersection representations. If $X$ is a nonempty subset of the quadratic tree $Q(D)$ discussed in (2.4), the intersection $A$ of the rings in $X$ is {\it irredundant} if no ring in $X$ can be omitted from the representation of $A$ as an intersection of rings in $X$. The property of irredundance plays an important role in the main proofs of the next section. 

Our analysis of irredundance depends on the behavior of these intersections with respect to localizations of $D$ at height-one prime ideals. 
The next lemma, which asserts that to each ring in a set of incomparable rings in $Q(D)$ can be associated a unique localization of $D$ at a height-one prime ideal, is proved in \cite[Lemma~8.5]{MR4097907} under an assumption on $D$ that holds when $D$ is Henselian. We are able to remove this assumption in the current proof.

\begin{theorem}\label{unique essential}
Let $X$ be a collection of two-dimensional regular local overrings of $D$ that are pairwise incomparable with respect to inclusion. Then for each $\alpha \in X$, there is a height $1$ prime ideal ${\mathfrak p}$ such that $D_{\mathfrak p}$ contains $\alpha$ but no other ring in $X$. 
\end{theorem}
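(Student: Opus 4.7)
My plan for each $\alpha \in X$ is to exhibit a height-$1$ prime $\mathfrak{p} \subset D$ by choosing a suitable generator $g \in \mathfrak{m}_\alpha \setminus \mathfrak{m}_\alpha^2$ of a height-$1$ prime of $\alpha$, so that the associated essential valuation ring $V := \alpha_{(g)}$ of $\alpha$ equals $D_\mathfrak{p}$, and moreover the only rings of $Q(D)$ contained in $V$ are those comparable with $\alpha$. Write $D = D_0 \subset D_1 \subset \cdots \subset D_n = \alpha$ for the unique chain of iterated local quadratic transforms realizing $\alpha$.

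First, I will arrange that $V$ is essential for $D$, rather than a valuation that dominates $D$. Observe that $V = \alpha_{(g)}$ dominates $D$ precisely when $\mathfrak{m}_D \alpha \subseteq (g)\alpha$; since $\alpha$ is a UFD, this holds exactly when $g$ is associated to one of the finitely many principal primes of $\alpha$ that appear as factors of $\mathfrak{m}_D \alpha$ (these come from the exceptional divisors of the blow-up chain that survive as height-$1$ primes in $\alpha$). Avoiding these finitely many choices yields $V = D_\mathfrak{p}$ for a unique height-$1$ prime $\mathfrak{p}$ of $D$ by the classification recalled in (2.1), and then $D_\mathfrak{p} \supseteq \alpha$ is automatic.

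Second, I will arrange that $D_\mathfrak{p} \not\supseteq \beta$ for every $\beta \in X \setminus \{\alpha\}$. For any $2$-dimensional regular local overring $\gamma$ of $D$ contained in $D_\mathfrak{p}$, the DVR $D_\mathfrak{p}$ cannot dominate $\gamma$ (it does not dominate $D$), so it is essential for $\gamma$; geometrically this translates into the condition that the iterated strict transforms of the curve $V(f) \subset \Spec(D)$, where $\mathfrak{p} = (f)$, pass through the closed point of every ring in $\gamma$'s quadratic chain. I will choose $g$ so that the strict transform of $V(f)$ at each $D_i$ along $\alpha$'s chain has a unique tangent direction at the closed point of $D_i$, i.e., its leading form there is a power of a single linear form in $\mathfrak{m}_{D_i}/\mathfrak{m}_{D_i}^2$. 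Since $D_\mathfrak{p}$ already contains $D_{i+1}$ for each $i < n$, this unique tangent direction is forced to point to $D_{i+1}$. Hence any $\gamma \subseteq D_\mathfrak{p}$ must lie on $\alpha$'s chain or on its unique continuation past $\alpha$; either way $\gamma$ is comparable with $\alpha$ in $Q(D)$, and pairwise incomparability of $X$ forces $\gamma = \alpha$ whenever $\gamma \in X$.

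The hard part is producing a single $g$ that simultaneously achieves the ``unique tangent direction'' condition at all $n$ stages of $\alpha$'s chain. Each stage individually imposes a Zariski-open condition on the class of $g$ in $\mathfrak{m}_\alpha/\mathfrak{m}_\alpha^2$, so the real content is to verify the joint nonemptiness of $n$ such open conditions without relying on analytic factorization in the completion $\widehat{D}$; this is precisely where \cite[Lemma~8.5]{MR4097907} invoked a Henselian hypothesis. I plan to carry this out by an inductive descent along $\alpha = D_n \supset D_{n-1} \supset \cdots \supset D_0 = D$: starting from a smooth $g$ at $\alpha$, at each stage replace $g$ by $g + h$ for suitably chosen $h \in \mathfrak{m}_\alpha^k$ (with $k$ chosen large enough relative to the order of the current strict transform at $D_i$) to fix the tangent direction of the strict transform at $D_i$ while preserving the tangent directions already arranged at $D_j$ for $j > i$, then use (2.1) to identify the resulting valuation of $D$ as essential and conclude.
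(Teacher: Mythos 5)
Your high-level strategy matches the paper's: reduce the problem to exhibiting, for $\alpha \in X$ with quadratic chain $D = D_0 \subset \cdots \subset D_n = \alpha$, an essential valuation ring $V = D_\mathfrak{p}$ containing $\alpha$ whose contractions $P_i = \mathfrak{m}_V \cap D_i$ have initial form a power of a single homogeneous prime at every level $i < n$; by Zariski-Samuel's Proposition 2 (the paper's Lemma~\ref{directions}) this forces the chain of $D_\mathfrak{p}$ through $Q(D)$ to pass through $D_{i+1}$ alone, so any $\gamma \in Q(D)$ inside $D_\mathfrak{p}$ is comparable with $\alpha$. Your preliminary reductions (avoiding the finitely many exceptional primes of $\alpha$ so $V$ is essential rather than dominating; observing $D_\mathfrak{p}$ is automatically essential for any $\gamma \subseteq D_\mathfrak{p}$) are correct, though note you need ``power of a homogeneous prime ideal,'' not ``power of a single linear form,'' since the residue field need not be algebraically closed.

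The gap is in the mechanism by which you propose to achieve the unique-tangent condition at all $n$ stages. The claim that ``each stage individually imposes a Zariski-open condition on the class of $g$ in $\mathfrak{m}_\alpha/\mathfrak{m}_\alpha^2$'' is false on two counts: the condition does not factor through $g \bmod \mathfrak{m}_\alpha^2$, and it is closed, not open. Take $D = k[x,y]_{(x,y)}$, $\alpha = D[y/x]_{(x,y/x)}$, $u=y/x$, and $g = u - x + cu^2$. All such $g$ have the same class mod $\mathfrak{m}_\alpha^2$. The contraction of $(g)\alpha$ to $D$ is generated by $xy - x^3 + cy^2$ when $c\neq 0$, an irreducible element of $D$ (it has no factorization over $k[x,y]_{(x,y)}$, even though it splits into two branches over the completion), whose initial form $\bar y(\bar x + c\bar y)$ has two distinct prime factors; for $c=0$ the contraction is $y - x^2$ with initial form $\bar y$, a prime power. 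So the good locus is the proper closed subvariety $c=0$. Consequently the step ``replace $g$ by $g+h$ with $h \in \mathfrak{m}_\alpha^k$, $k$ large'' cannot work: if $k$ is large enough to preserve the already-fixed tangent directions at higher levels, it is also large enough that the initial form at $D_0$ is unchanged, and when the current $g$ is bad (as in the example) no high-order additive perturbation can repair it.

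The paper's proof resolves this via a multiplicative, not additive, device: Zariski-Samuel's Proposition 3 (the paper's Lemma~\ref{approximation}) allows one to factor $\InitialForm(p_k) = GH$ with $G$ the power of the prime pointing toward $D_{k+1}$, approximate $p_k$ by a genuine product $gh$ to arbitrarily high order, and then \emph{replace} the generator $p_k$ (hence $V$) by the factor $g$ rather than perturbing it. The element $g$ is not close to $p_k$, but because $p_k - gh$ lies in $\mathfrak{m}_k^m$ with $m = \sum_i r_i$, and because $h$ becomes a unit from $\alpha_{k+1}$ onward, the strict transforms of $g$ up the chain have the same initial forms as those of $p_k$, so the levels above $k$ remain fixed while level $k$ is repaired. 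This factorization-plus-estimate argument is precisely what replaces the Henselian factorization in $\widehat{D}$ used in the earlier reference, and it is the ingredient missing from your outline.
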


To prove this theorem, we use two lemmas from 
\cite[Appendix 5]{MR0389876}. To state these, we recall several definitions. 
For a Noetherian local ring $R$ and a nonzero element $f \in R$, the \emph{order} of $f$ with respect to the maximal ideal $\mathfrak{m}$ of $R$, denoted $\ord (f)$, is the unique integer $r$ such that $f \in \mathfrak{m}^r \setminus \mathfrak{m}^{r+1}$.
If $R$ is a regular local ring, then its order function $\ord$ extends to a valuation on its quotient field and its associated valuation ring is the \textit{order valuation ring} for $R$.
With $\ord (f) = r$, the \textit{initial form} of $f$, denoted $\InitialForm (f)$, is the image of $f$ in the degree $r$ component $\mathfrak{m}^r / \mathfrak{m}^{r+1}$ of the associated graded ring $\AssociatedGraded_{\mathfrak{m}} (R) = R [\mathfrak{m} t] / \mathfrak{m} R [\mathfrak{m} t]$, where $t$ is an indeterminate. 
For an ideal $I \subseteq R$, its initial form ideal is the ideal generated by its initial forms, $\InitialForm (I) = ( \InitialForm (f) \mid f \in I ) \AssociatedGraded_{\mathfrak{m}} (R)$.
We will use the fact that the prime factors of the initial form of an element of a two-dimensional regular local ring $\alpha$ determine the finitely many points in the first neighborhood of $\alpha$ contained in the essential valuation rings of $\alpha$.

\begin{lemma}{\em \cite[Appendix 5, Proposition 3]{MR0389876}}\label{approximation}
Let $\alpha$ be a two-dimensional regular local ring, let $f \in \mathfrak{m}_\alpha$, and let $\InitialForm (f) = G H$ be a factorization in $\AssociatedGraded_\alpha (\mathfrak{m}_\alpha)$, where $G$ and $H$ are relatively prime.
Then for any $n > \ord (f)$, there exist $g, h \in \alpha$ such that $\InitialForm (g) = G$, $\InitialForm (h) = H$, and $f - g h \in {\mathfrak{m}_\alpha}^n$.
\end{lemma}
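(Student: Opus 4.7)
The plan is a Hensel-style iterative lifting. Set $r = \ord(f)$, $s = \deg G$, $t = \deg H$, so $s+t = r$, and write $R = \AssociatedGraded_\alpha(\mathfrak{m}_\alpha)$, which is a polynomial ring in two variables over the residue field of $\alpha$ because $\alpha$ is a two-dimensional regular local ring. I will construct by induction on $m$ running from $r+1$ up to $n$ elements $g_m, h_m \in \alpha$ with $\InitialForm(g_m) = G$, $\InitialForm(h_m) = H$, and $f - g_m h_m \in \mathfrak{m}_\alpha^m$. The base case is immediate: pick any lifts $g_{r+1} \in \mathfrak{m}_\alpha^s$ and $h_{r+1} \in \mathfrak{m}_\alpha^t$ of $G$ and $H$; since $R$ is a domain, $\InitialForm(g_{r+1} h_{r+1}) = GH = \InitialForm(f)$, forcing $f - g_{r+1}h_{r+1} \in \mathfrak{m}_\alpha^{r+1}$.

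For the inductive step, given $g_m, h_m$, I look for a correction $g_{m+1} = g_m + \gamma$, $h_{m+1} = h_m + \delta$ with $\gamma \in \mathfrak{m}_\alpha^{m-t}$ and $\delta \in \mathfrak{m}_\alpha^{m-s}$. Because $m \geq r+1$ implies $m-t > s$ and $m-s > t$, such corrections do not disturb the initial forms. Expanding,
$$
f - g_{m+1}h_{m+1} = (f - g_m h_m) - (h_m\gamma + g_m\delta) - \gamma\delta.
$$
The cross term $\gamma\delta$ lies in $\mathfrak{m}_\alpha^{2m-r}$, and $2m-r \geq m+1$, so it is negligible modulo $\mathfrak{m}_\alpha^{m+1}$. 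Passing to $R$, the problem reduces to finding homogeneous $P, Q \in R$ of degrees $m-t$ and $m-s$ with $E = HP + GQ$, where $E$ is the degree-$m$ image of $f - g_m h_m$.

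The existence of such $P, Q$ is a statement about the polynomial ring $R$: every form of degree $m$ must lie in the homogeneous ideal $(G, H)$. Here I invoke the fact that two coprime nonzero homogeneous elements of a two-variable polynomial ring form a regular sequence, so the Koszul complex is a free resolution of $R/(G, H)$. The resulting Hilbert series is $(1-z^s)(1-z^t)/(1-z)^2$, a polynomial of degree $r-2$; hence $R/(G, H)$ vanishes in all degrees $\geq r-1$, and in particular in the degree $m \geq r+1$ we need. Lifting $P, Q$ back to $\gamma, \delta \in \alpha$ of the correct orders completes the step, and after $n - r - 1$ iterations one obtains the desired $g = g_n$ and $h = h_n$.

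The most delicate point is the bookkeeping between the local and graded pictures: one must check that the degree-$m$ image of $h_m\gamma + g_m\delta$ really is $HP + GQ$, which uses that $\InitialForm$ is multiplicative in $\alpha$ (because $R$ is a domain), and one must gracefully accommodate degenerate possibilities where $P$ or $Q$ vanishes by allowing the corresponding correction to have higher order or to be zero. With these observations in place, the crux of the argument is the graded B\'ezout identity in two variables, supplied by the Koszul/Hilbert-series calculation above.
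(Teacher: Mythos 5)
Your argument is correct. Note that the paper does not prove this lemma at all --- it is quoted verbatim from Zariski--Samuel (Appendix 5, Proposition 3 of the cited reference) --- and your Hensel-style lifting, with the key input that two coprime forms $G,H$ in the two-variable polynomial ring $\AssociatedGraded_{\mathfrak{m}_\alpha}(\alpha)$ generate every form of degree $\geq \deg G + \deg H - 1$ (which you supply via the Koszul resolution and Hilbert series), is essentially the classical proof from that source; the degree bookkeeping ($\gamma \in \mathfrak{m}_\alpha^{m-t}$, $\delta \in \mathfrak{m}_\alpha^{m-s}$, cross term in $\mathfrak{m}_\alpha^{2m-r} \subseteq \mathfrak{m}_\alpha^{m+1}$) all checks out.
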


\begin{lemma}{\em \cite[Appendix 5, Proposition 2]{MR0389876}}\label{directions}
Let $\alpha$ be a two-dimensional regular local ring, let $f \in \mathfrak{m}_\alpha$ be a prime element with associated essential valuation ring $V = D_{(f)}$, and let $(\InitialForm (f)) = \mathfrak{p}_1^{a_1} \cdots \mathfrak{p}_s^{a_s}$, where $\mathfrak{p}_i$ are distinct prime ideals and $a_1, \ldots, a_s > 0$.
Then the $\mathfrak{p}_i$ are in one-to-one correspondence with the points $\alpha' \in Q_1 (D)$ such that $V \supseteq \alpha'$.
\end{lemma}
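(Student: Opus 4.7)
The plan is to construct, for each $\alpha \in X$, a prime element $f \in D$ such that $\mathfrak{p} := fD$ is the desired height one prime; namely, $D_\mathfrak{p} = D_{(f)}$ contains $\alpha$ but no other ring of $X$. Fix the unique chain $D = D_0 \subset D_1 \subset \cdots \subset D_n = \alpha$ of iterated local quadratic transforms guaranteed by~(2.4), and for each $0 \le i < n$ let $L_i$ be a prime homogeneous polynomial in $\AssociatedGraded_{\mathfrak{m}_{D_i}}(D_i)$ such that $(L_i)$ corresponds to $D_{i+1} \in Q_1(D_i)$ under the bijection with the closed points of $\Proj \AssociatedGraded_{\mathfrak{m}_{D_i}}(D_i)$.

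The key is to force the strict transform $f_i$ of $f$ in $D_i$ to have initial form $\InitialForm(f_i) = c_i L_i^{a_i}$, a pure prime power (with $c_i$ a unit scalar and $a_i \ge 1$), for every $0 \le i < n$. Granting this, Lemma~\ref{directions} applied in each $D_i$ gives that $D_{(f)} = (D_i)_{(f_i)}$ contains exactly one first-neighborhood point of $D_i$, namely $D_{i+1}$; iterating yields $D_{(f)} \supseteq \alpha$. For any $\beta \in X \setminus \{\alpha\}$, pairwise incomparability forces the chain from $D$ to $\beta$ to diverge from the chain to $\alpha$ at some level $i < n$, in the sense that $\beta \supseteq D_i$ but $\beta$ contains a first-neighborhood point of $D_i$ distinct from $D_{i+1}$. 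Since $D_{(f)}$ contains only $D_{i+1}$ among such points, $D_{(f)} \not\supseteq \beta$, which gives the separation claimed in the theorem.

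The construction of $f$ proceeds by induction on $n$. For $n = 0$, incomparability collapses $X$ to $\{D\}$ and any height one prime of $D$ suffices. For $n = 1$, take $f$ to be any lift of $L_0$ to $\mathfrak{m}_D^{\deg L_0}$ with $\InitialForm(f) = L_0$; primality of $L_0$ in the UFD $\AssociatedGraded_{\mathfrak{m}_D}(D) \cong (D/\mathfrak{m}_D)[T_1, T_2]$ forces $f$ to be irreducible, and hence prime, in $D$. For $n \ge 2$, apply the inductive hypothesis to $D_1$ and the sub-collection $X_+ := \{\beta \in X : \beta \supseteq D_1\}$ (pairwise incomparable in $Q(D_1)$, with $\alpha \in X_+$ of chain length $n-1$) to obtain a prime $g \in D_1$ realizing the initial-form conditions at levels $1, \ldots, n-1$. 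We then produce $f \in D$ by writing $f = \tilde L_0 + \text{(correction)}$, where $\tilde L_0 \in \mathfrak{m}_D^{\deg L_0}$ lifts $L_0$ and the correction lies in a sufficiently deep $\mathfrak{m}_D$-power; the correction is chosen so that, in $D_1$, the strict transform of $f$ agrees with $g$ modulo $\mathfrak{m}_{D_1}^N$ for $N$ large enough to preserve the initial-form behavior at all subsequent levels. Lemma~\ref{approximation} is the tool that permits this controlled lifting without any Henselian hypothesis.

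The principal technical obstacle is the descent from $D_1$ back to $D$: elements of $D_1$ do not generally lie in $D$, and the strict-transform operation is not invertible by polynomial manipulation alone. Lemma~\ref{approximation} supplies the essential ingredient by producing element-level factors from coprime initial-form decompositions up to arbitrarily high $\mathfrak{m}_{D_1}$-adic error, and a secondary issue is to arrange that the resulting $V = D_{(f)}$ is essential for each $D_j$ (not merely for $D_1$), which translates into excluding the finitely many ``exceptional'' primes at each level when invoking the inductive hypothesis. Iterating the approximation along the chain assembles the required $f \in D$, and this replaces the Hensel-style argument in \cite[Lemma~8.5]{MR4097907} that implicitly relied on the assumption on $D$ holding in the Henselian case.
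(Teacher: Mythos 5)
Your proposal does not prove the statement it was assigned. The statement is Lemma~\ref{directions}: for a prime element $f\in\mathfrak{m}_\alpha$ with $V=\alpha_{(f)}$, the distinct prime factors $\mathfrak{p}_1,\ldots,\mathfrak{p}_s$ of $(\InitialForm(f))$ in $\AssociatedGraded_{\mathfrak{m}_\alpha}(\alpha)$ biject with the first-neighborhood points $\alpha'\in Q_1(\alpha)$ contained in $V$. What you have written is a plan for Theorem~\ref{unique essential} (equivalently Lemma~\ref{PrimeLemma}): producing, for each $\alpha$ in an antichain $X$, an essential valuation ring containing $\alpha$ and no other member of $X$. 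Worse, your argument explicitly invokes Lemma~\ref{directions} as a black box (``Granting this, Lemma~\ref{directions} applied in each $D_i$ gives\ldots''), so even reinterpreted generously it is circular as a proof of the lemma. Note that the paper itself offers no proof of Lemma~\ref{directions}; it is quoted from Zariski--Samuel, Appendix~5, Proposition~2, and a genuine proof would have to establish the dictionary between the homogeneous primes dividing $\InitialForm(f)$ (equivalently, the tangent directions of the curve $f=0$) and the closed points of the exceptional fiber of the blow-up of $\mathfrak{m}_\alpha$ through which the strict transform of $f$ passes --- none of which appears in your text.

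Even judged as an attempt at Theorem~\ref{unique essential}, the proposal is a sketch rather than a proof: the step you yourself identify as ``the principal technical obstacle'' --- descending from a prime $g\in D_1$ with the right initial forms at levels $1,\ldots,n-1$ to a prime $f\in D$ whose strict transform in $D_1$ agrees with $g$ to high order while $\InitialForm(f)$ is a pure power of $L_0$ --- is precisely where all the work lies, and it is left unexecuted. The paper's Lemma~\ref{PrimeLemma} resolves this by working in the opposite direction: it starts from an arbitrary essential valuation ring $V$ of $\beta$, locates the largest level $k$ at which $\InitialForm(P_k)$ fails to be a prime power, factors $\InitialForm(p_k)=GH$ with $(G)=\mathfrak{p}^t$ the part supported on the direction of $\alpha_{k+1}$, applies Lemma~\ref{approximation} to write $p_k\equiv gh \bmod \mathfrak{m}_k^m$ with $m$ large enough that the congruence survives transform all the way up to $\beta$, and replaces $V$ by an essential valuation ring of $g$; iterating strictly decreases $k$. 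If you intend to salvage your induction, you must carry out the lifting step in comparable detail, and in any case you must supply an actual argument for Lemma~\ref{directions} itself rather than a proof that consumes it.
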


Using the above two lemmas and implicitly using the idea of the transform of an ideal as in \cite[Appendix 5]{MR0389876}, \cite{MR491722}, and \cite{MR0977761}, we prove Lemma~\ref{PrimeLemma}, which is a rephrasing of Theorem~\ref{unique essential}.
The technique of the proof is to start with an arbitrary essential valuation ring of a point $\beta$ in the quadratic tree $Q(D)$ from (2.4), then to iteratively approximate this essential valuation ring down the quadratic tree towards $D$ such that the approximations only occur along the path back towards $\beta$.

\begin{lemma}\label{PrimeLemma}
Let $\beta \in Q (D)$.
Then there exists an essential valuation ring of $D$ and $\beta$, say $V$, such that for all $\gamma \in Q (D)$, if $V$ is an essential valuation ring of $\gamma$, then $\gamma$ is comparable to $\beta$.
\end{lemma}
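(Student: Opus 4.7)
The plan is to construct a prime element $f \in D$ whose essential valuation ring $V := D_{(f)}$ is also essential for every $D_i$ in the unique chain $D = D_0 \subset D_1 \subset \cdots \subset D_n = \beta$ of local quadratic transforms from $D$ to $\beta$, and such that for each $i < n$, the only first-neighborhood point of $D_i$ contained in $V$ is $D_{i+1}$. Granted such $f$, the lemma follows quickly: for any $\gamma \in Q(D)$ having $V$ as an essential valuation ring, $\gamma \subseteq V$, and the unique chain of quadratic transforms in $Q(D)$ from $D$ up to $\gamma$ must, at each step $i < n$, pass through the unique first-neighborhood point of $D_i$ inside $V$, which by construction is $D_{i+1}$. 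Hence the chain coincides with $D_0, D_1, \ldots$ and either terminates at some $\gamma = D_i \subseteq \beta$ or continues strictly beyond $\beta$; in either case $\gamma$ is comparable to $\beta$.

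Following the technique described just before the statement, I would build $f$ by iterative approximation along the chain, using Lemma~\ref{approximation} to control the initial forms of the strict transforms at each level. Begin with any prime $f_0 \in D$ whose initial form in $\AssociatedGraded_{\mathfrak{m}_D}(D)$ is a power of the linear form $L_0$ that corresponds (via Lemma~\ref{directions}) to $D_1$; the associated essential valuation ring has $D_1$ as its unique first-neighborhood point of $D$. The strict transform of $f_0$ in $D_1$ is a prime element generating the same valuation ring. If its initial form in $\AssociatedGraded_{\mathfrak{m}_{D_1}}(D_1)$ factors as $L_1^a \cdot H$ with $H$ coprime to $L_1^a$, where $L_1$ is the linear form corresponding to $D_2$, then Lemma~\ref{approximation} yields an approximate factorization into elements with prescribed initial forms; lifting the ``$L_1^a$'' factor back to a perturbation of $f_0$ in $D$ of sufficiently high order in $\mathfrak{m}_D$ (to preserve the initial form condition at level $0$), I replace $f_0$ by a nearby prime whose strict transform in $D_1$ now has initial form a power of $L_1$. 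Repeating at each subsequent level calibrates the initial form of the strict transform at $D_i$ to be a power of $L_i$.

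The main obstacle is the bookkeeping: each correction at level $i$ must (a) achieve the prescribed initial form of the strict transform at $D_{i+1}$, and (b) not disturb the initial forms already calibrated at $D_0, \ldots, D_i$. This is handled by the observation that perturbing $f_0 \in D$ by an element of sufficiently high order in $\mathfrak{m}_D$ produces a perturbation of each strict transform at higher levels whose order can be made arbitrarily large, in particular above the relevant initial-form degrees. One must also verify that the perturbed element remains prime in $D$, but this is standard: in a two-dimensional regular local ring, a prime element generating a height-one prime is stable under perturbations by sufficiently high powers of the maximal ideal. Since the chain has finite length $n$, finitely many such adjustments produce the desired $f$.
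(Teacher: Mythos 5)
Your opening reduction and overall strategy agree with the paper: find an essential valuation ring $V$ of $D$ and $\beta$ such that at each $\alpha_i$ in the chain $D = \alpha_0 \subsetneq \cdots \subsetneq \alpha_n = \beta$ the only first-neighborhood point of $\alpha_i$ contained in $V$ is $\alpha_{i+1}$, equivalently (via Lemma~\ref{directions}) such that the initial form of the center of $V$ on each $\alpha_i$ is a power of a single homogeneous prime. You also correctly identify Lemma~\ref{approximation} as the technical tool. However, there is a genuine gap in the direction and mechanics of your iterative construction.

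Your plan is bottom-up: fix level $0$, then apply Lemma~\ref{approximation} in $D_1$ to split off the ``good'' factor $g_1$ with initial form $L_1^a$, and then ``lift the $L_1^a$ factor back to a perturbation of $f_0$ in $D$ of sufficiently high order in $\mathfrak{m}_D$.'' This step does not work as stated, for two reasons. First, a perturbation of $f_0$ by an element of order $N$ in $\mathfrak{m}_D$ changes the strict transform at level $1$ by an element of order at least $N - \ord_0(f_0)$; so, by the same reasoning you invoke to preserve the level-$0$ initial form, such a perturbation also preserves the level-$1$ initial form and therefore cannot ``fix'' it. The approximation lemma does not produce a small perturbation of $f_1$; it produces an approximate \emph{factorization} $f_1 \approx g_1 h_1$ where one factor is discarded. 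Second, if instead you replace $f_0$ by a genuine lift of $g_1$ (rather than a high-order perturbation), the element $g_1$ lives in $D_1$, and it is not clear how to lift it to $D$ while controlling the resulting order-$0$ initial form; the lifted element need not have initial form a power of $L_0$, so the calibration at level $0$ is lost. The paper circumvents both difficulties by applying Lemma~\ref{approximation} \emph{only at the lowest failing level}: it takes the largest index $k$ at which $\InitialForm(P_k)$ fails to be a prime power, factors the generator $p_k$ of the center \emph{in $\alpha_k$} into $g\,h$, keeps the factor $g \in \alpha_k$, and then shows by a downward induction that each strict transform $g_i$ ($k \le i < n$) inherits the required prime-power initial form precisely because the initial forms at levels $> k$ were already correct. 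No lifting from a higher level to a lower one is needed, and the iteration strictly decreases the largest failing level. Your proposal needs to be reorganized along these lines (top-down on the failing level rather than bottom-up on all levels) before it closes.
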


\begin{proof}
Let $D = \alpha_0 \subsetneq \alpha_1 \subsetneq \cdots \subsetneq \alpha_n = \beta$ be the unique sequence of local quadratic transforms from $D$ to $\beta$, and let $\mathfrak{m}_i$ denote the maximal ideal of $\alpha_i$.
Let $V$ be any essential valuation ring of both $D$ and $\beta$, i.e.\ let $V$ be any essential valuation ring of $\beta$ that isn't the order valuation ring of an $\alpha_i$.
We shall inductively modify $V$ until it satisfies the statement of the lemma.

For each $i$, denote $P_i = \mathfrak{m}_V \cap \alpha_i$, the center of $V$ on $\alpha_i$. As an essential valuation ring of $D$ that contains $\alpha_i$, $V$ is also an essential valuation ring of $\alpha_i$.
By Lemma~\ref{directions}, the valuation ring $V$ satisfies the statement of the lemma if and only if $\InitialForm (P_i)$ is a power of a homogeneous prime ideal for each $i$, $0 \le i < n$.

Suppose $V$ doesn't already satisfy the statement of the lemma, and let $k$ be the largest integer, $0 \le k < n$, such that $\InitialForm (P_k)$ is not the power of a homogeneous prime ideal.
To simplify notation, we assume $k = 0$, but the following argument applies for any value of $k$.
Choose a generator for $P_0$, say $P_0 = p_0 \alpha_0$.
For each $i$ with $0 \le i \le n$, fix an element $x_i \in \mathfrak{m}_i$ such that $\mathfrak{m}_i \alpha_{i+1} = x_i \alpha_{i+1}$, denote the order valuation of $\alpha_i$ by $\ord_i$,
and denote $r_i = \ord_i (P_i)$.
For each $i$ with $0 \le i \le n$, define
    $$p_i = \left( \prod_{0 \le j < i} {x_i}^{- r_j} \right) p_0,$$
so $p_i$ generates $P_i$.
For each $k$ from $0 \le k \le n + 1$, denote by $s_k$ th partial sum $s_k = \displaystyle \sum_{i=0}^{k-1} r_i$, and denote $m = \displaystyle \sum_{i=0}^{n} r_i = s_{n+1}$.

Write $\InitialForm (P_0) = \mathfrak{p}^{t} \mathfrak{a}$, where $\alpha_1$ corresponds to the homogeneous prime ideal $\mathfrak{p}$ and $\mathfrak{a}$ is relatively prime to $\mathfrak{p}$.
Using the fact that $\mathfrak{p}$ and $\mathfrak{a}$ are principal ideals, write $\InitialForm (p_0) = G H$ for some $G, H \in \AssociatedGraded_{\mathfrak{m}_0} (\alpha_0)$, where $(G) = \mathfrak{p}^t$ and $(H) = \mathfrak{a}$. 
Then find $g, h \in \alpha_0$ as in Lemma~\ref{approximation} such that $\InitialForm (g) = G$, $\InitialForm (h) = H$, and $p_0 - g h \in {\mathfrak{m}_0}^m$.
We claim that any height $1$ prime ideal containing $g$ satisfies the statement of the lemma.

Write $g_1 = g {x_0}^{- \ord_0 (g)}$ and $h' = h {x_0}^{- \ord_{0} (h)}$; notice that $\ord_0 (g) + \ord_0 (h) = r_0$ and $p_0 - g h = {x_0}^{r_0} (p_1 - g_1 h')$.
Since the ring $\alpha_1$ corresponds to the homogeneous prime ideal $\mathfrak{p}$ as in Lemma~\ref{directions}, and since $\mathfrak{a}$ is relatively prime to $\mathfrak{p}$, it follows that $h'$ is a unit in $\alpha_1$.
Since 
    $$p_0 - g h \in {\mathfrak{m}_0}^m \subseteq {x_0}^m \alpha_1,$$
it follows that 
    $$p_1 - g_1 h' = {x_0}^{- r_0} (p_0 - g h) \in {x_0}^{m - r_0} \alpha_1 \subseteq {\mathfrak{m}_1}^{m - r_0},$$
and since $m - r_0 > r_1$, we obtain $\InitialForm (p_1) = \InitialForm (g_1) \InitialForm (h')$.

Now, having defined $g_1$, inductively define $g_{i+1} = g_i {x_i}^{- r_i}$ for all $i$, $1 \le i <n$.
We prove by induction that $p_i - g_i h' \in {\mathfrak{m}}_i^{m - s_i}$ for all $i \leq n$, where the base case $i = 1$ has been verified above.
Suppose that this claim is true for $i$, where $1 \le i < n$.
Since $m - s_i > r_i$ and  $h'$ is a unit in $\alpha_i$, it follows that $\ord_i (g_i) = r_i$.
Then,
\begin{align*}
    p_{i+1} - g_{i+1} h' 
        &= \frac{p_i}{{x_i}^{r_i}} - \frac{g_i}{{x_i}^{r_i}} h' \\
        &= \frac{p_i - g_i h'}{{x_i}^{r_i}} \\
        &\in {x_i}^{-r_i}{\mathfrak{m}_i}^{m - s_i} \alpha_{i+1} \\
        &= {x_i}^{m - s_i - r_i} \alpha_{i+1} \\
        &= {x_i}^{m - s_{i+1}} \alpha_{i+1} \\
        &\subseteq {\mathfrak{m}_{i+1}}^{m - s_{i+1}},
\end{align*}
proving the claim.

Let $V'$ be an essential valuation ring of $\alpha_0$ such that $g \in \mathfrak{m}_{V'}$, and denote $P_i' = \mathfrak{m}_{V'} \cap \alpha_i$ for each $i$, $0 \le i \le n$, so $P_i'$ is a height $1$ prime containing $g_i \alpha_i$.
In $\alpha_0$, we have $(\InitialForm (g)) = \mathfrak{p}^t$, so $\InitialForm (P_i')$ is a power of $\mathfrak{p}$.
For $1 \le i < n$, since $\InitialForm (P_i') \supseteq (\InitialForm (g_i)) = \InitialForm (P_i)$ and $\InitialForm (P_i)$ is a power of a homogeneous prime ideal, so is $\InitialForm (P_i')$.
Thus $V'$ satisfies the statement of the lemma between $\alpha_k$ and $\beta = \alpha_n$, where $0 \le k < n$.
By repeating this argument, we construct $V'$ satisfying the statement of this lemma for $k = 0$, proving the lemma.
\end{proof}

The next lemma further clarifies the relationship between intersections of rings in the quadratic tree  $Q(D)$ from (2.4) and localizations of $D$ at height-one primes.

\begin{lemma} \label{localization}
Let $n\geq 1$,  let $ X $ be a nonempty subset of   $ Q_{\leq n}(D)$, and let ${\mathfrak{p}}$ be a height one prime ideal of $D$. Then $\bigcap_{\alpha \in X}\alpha \subseteq D_{\mathfrak{p}}$ if and only if $\alpha \subseteq D_{\mathfrak{p}}$ for some   $\alpha \in X$.
\end{lemma}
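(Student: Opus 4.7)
The backward implication is immediate: if some $\alpha \in X$ satisfies $\alpha \subseteq D_\mathfrak{p}$, then $\bigcap_{\beta \in X} \beta \subseteq \alpha \subseteq D_\mathfrak{p}$. For the forward implication I will argue the contrapositive and induct on $n$: assuming no $\alpha \in X$ is contained in $D_\mathfrak{p}$, I exhibit an element of $\bigcap_{\alpha \in X}\alpha$ lying outside $D_\mathfrak{p}$. Deleting any $\beta \in X$ that properly contains another $\alpha \in X$ does not change the intersection, so I may assume $X$ is an antichain; this in particular excludes $D \in X$, since $D \subseteq D_\mathfrak{p}$. Write $\mathfrak{p} = (f)$, let $r = \ord(f)$ computed in $D$, and choose $x \in \mathfrak{m} \setminus (\mathfrak{p} \cup \mathfrak{m}^2)$, which is possible by prime avoidance since $\mathfrak{m}$ is contained neither in the prime $\mathfrak{p}$ nor in $\mathfrak{m}^2$; extend $x$ to a regular system of parameters $\mathfrak{m} = (x,y)$.

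For the base case $n = 1$ we have $X \subseteq Q_1(D)$. By Lemma~\ref{directions}, the set $S = \{\delta \in Q_1(D) : \delta \subseteq D_\mathfrak{p}\}$ is finite and corresponds bijectively to the distinct homogeneous prime factors of $\InitialForm(f)$ in $\AssociatedGraded_\mathfrak{m}(D)$, and by hypothesis $X \cap S = \emptyset$. I claim that $a := x^r/f$ belongs to $\bigcap X \setminus D_\mathfrak{p}$. The computation $v_\mathfrak{p}(a) = -1$ immediately gives $a \notin D_\mathfrak{p}$. To show $a \in \delta$ for every $\delta \in Q_1(D) \setminus S$, expand $f = \sum_{i+j \geq r} a_{i,j} x^i y^j$; in the chart $D[y/x]$ this becomes $f = x^r \bigl( h_r(y/x) + x \cdot (\text{higher order})\bigr)$, where $h_r(T) = \sum_{i+j=r} a_{i,j} T^j$ is the dehomogenization of $\InitialForm(f)$. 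For $\delta = D[y/x]_{(x, m(y/x))}$ with irreducible $m \in (D/\mathfrak{m})[T]$, the residue $h_r(y/x) \bmod \mathfrak{m}_\delta$ equals $h_r \bmod m$, which is nonzero precisely when $m \nmid h_r$, that is, precisely when $\delta \notin S$; in that case the second factor of $f$ is a unit in $\delta$ and cancellation gives $a \in \delta$. The symmetric computation in the $D[x/y]$ chart handles the remaining first-neighborhood point $D[x/y]_{(x/y, y)}$.

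For the inductive step $n \geq 2$, partition $X = \bigsqcup_{\delta \in Q_1(D)} X_\delta$ according to first-neighborhood ancestor. When $\delta \notin S$ and $X_\delta \neq \emptyset$, the inclusion $\delta \subseteq \bigcap X_\delta$ together with $\delta \not\subseteq D_\mathfrak{p}$ already gives $\bigcap X_\delta \not\subseteq D_\mathfrak{p}$. For the finitely many $\delta \in S$ with $X_\delta \neq \emptyset$, the equality $D_\mathfrak{p} = \delta_{\mathfrak{q}(\delta)}$, where $\mathfrak{q}(\delta) = \mathfrak{p} D_\mathfrak{p} \cap \delta$ is a height-one prime of $\delta$, reduces matters to the inductive hypothesis applied to the antichain $X_\delta \subseteq Q_{\leq n-1}(\delta)$ and the prime $\mathfrak{q}(\delta)$, yielding $\bigcap X_\delta \not\subseteq D_\mathfrak{p}$. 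The main obstacle—the technical heart of the proof—is then consolidating these local, $\delta$-dependent witnesses into a single element of $\bigcap X = \bigcap_\delta \bigcap X_\delta$ outside $D_\mathfrak{p}$, since the witnesses coming from different $\delta \in S$ need not be compatible. My plan is to strengthen the inductive statement so that it produces a uniform witness of the form $a = x^N/f$ for some exponent $N$ depending only on $n$ and $r$; verifying this reduces to showing $x^N \in f\alpha$ for every $\alpha \in Q_{\leq n}(D)$ with $\alpha \not\subseteq D_\mathfrak{p}$. I plan to analyze this by tracking the factorization $f = z_0^{r_0} z_1^{r_1} \cdots z_{k-1}^{r_{k-1}} \cdot f_k$ along the chain $D = \alpha_0 \subsetneq \cdots \subsetneq \alpha_k = \alpha$, where $z_i$ is the blow-up parameter at stage $i$ and $f_i$ is the successive transform of $f$: Lemma~\ref{directions} guarantees that $f_k$ becomes a unit in $\alpha$ as soon as the chain exits the direction of $\mathfrak{p}$, and an exponent accounting in the spirit of the transforms-of-ideals manipulations used in the proof of Lemma~\ref{PrimeLemma} then bounds the power of $x$ that must be divisible by $f$ in $\alpha$.
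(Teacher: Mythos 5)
The backward implication, the reduction to an antichain, and the base case $n=1$ are all fine, and the base-case witness $x^r/f$ is a nice explicit computation. The gap is exactly where you flag it: the inductive step is not a proof but a plan. You correctly observe that the witnesses produced by the inductive hypothesis in the various $\delta \in S$ need not be compatible, and you propose to fix this by strengthening the statement to produce a uniform witness $x^N/f$. But this strengthening is itself nontrivial and is never carried out, and the difficulty is real. For one thing, the exponent must genuinely grow with $n$: take $\mathfrak{p} = (y)$ and $x$ the other parameter, $\alpha_1 = D[y/x]_{(x,y/x)}$, and $\alpha = \alpha_1[x/(y/x)]_{(x/(y/x),\,y/x)}$; then $x/y \notin \alpha$ but $x^2/y \in \alpha$. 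For another, the element $x$ need not be a regular parameter in $\delta$ when you try to re-apply the inductive hypothesis there: for $\mathfrak{p} = (x + y^2)$, $x$ is admissible in $D$ but $x = y\cdot(x/y) \in \mathfrak{m}_\delta^2$ in the unique $\delta \in S = \{D[x/y]_{(x/y,y)}\}$, so the strengthened hypothesis cannot be applied verbatim with $x$ and must instead track a factorization of $x$ and $f$ through the chain. You anticipate this (the ``exponent accounting'' you describe), but until it is proven the argument has a hole exactly at the place the lemma becomes nontrivial.

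The paper's proof sidesteps the problem of exhibiting an explicit element entirely. Instead of a contrapositive with a witness, it works directly: decomposing $\bigcap_{\alpha\in X}\alpha$ as $R_1\cap\cdots\cap R_m\cap R$ according to first-neighborhood ancestors and then localizing at $\mathfrak p$, it uses that $D_{\mathfrak p}$ is a DVR (so finite intersections of overrings of a DVR can equal the DVR only if one of the intersectands already does) to conclude that some $R_i$ or $R$ is contained in $D_\mathfrak{p}$. It then rules out $R \subseteq D_\mathfrak{p}$ by a Noetherian-subspace/commutation-with-localization argument applied to $Q_1(D)\setminus\{\alpha_1,\dots,\alpha_m\}$, and finally descends into the relevant $\alpha_i$. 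This avoids the consolidation problem because each step produces a containment of rings, not an element, and containments pass through the recursion automatically. Your approach, if completed, would be more constructive but would require a careful analysis of how both $x$ and $f$ factor through the exceptional divisors of the chain, which the paper never needs.
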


\begin{proof} 
Suppose $\bigcap_{\alpha \in X}\alpha \subseteq D_{\mathfrak{p}}$. 
We may assume $D \not \in X$.  By Lemma~\ref{directions}, there are finitely many rings $\alpha_1,\ldots,\alpha_n$
 in $Q_1(D)$ that are contained in $D_{\mathfrak{p}}$. 
 For each $i \in \{1,2,\ldots,n\}$, let $R_i$ be the intersection of the rings in $X$ that contain $\alpha_i$, where if there are no such rings, we let $R_i$ be the quotient field of $D$. Also, let $R$ be the intersection of the rings in $X$ that do not contain any of the $\alpha_i$, where $R$ is the quotient field of $D$ if there are no such rings. The intersection of the rings in $X$ is $R_1 \cap \cdots \cap R_n \cap R$.   
  By assumption, $R_1 \cap \cdots \cap R_n \cap R \subseteq D_{\mathfrak{p}}$, and so $R_1 D_{\mathfrak{p}} \cap  \cdots \cap R_nD_{\mathfrak{p}} \cap RD_{\mathfrak{p}} = D_{\mathfrak{p}}$. Since $D_{\mathfrak{p}}$ is a DVR it follows that $R \subseteq D_{\mathfrak{p}}$ or $R_i \subseteq D_{\mathfrak{p}}$ for some $i$. 
 
 We claim the   case    $R \subseteq D_{\mathfrak{p}}$  cannot occur. Indeed, suppose $R \subseteq D_{\mathfrak{p}}$.  Since the intersection of  the rings in $Q_1(D) \setminus \{\alpha_1,\ldots,\alpha_n\}$ is contained in $R$, this intersection is contained in  $D_{\mathfrak{p}}$. However, $Q_1(D) \setminus \{\alpha_1,\ldots,\alpha_n\}$ is a Noetherian subspace of the space of all local overrings with the Zariski  topology (this follows for example from \cite[Theorem~5.6]{MR4097907}), and hence the intersection of these rings commutes with localization \cite[Theorem~3.5]{MR2724221}. Therefore, if $R$ is contained in $D_{\mathfrak{p}}$, then as above one of the rings in $Q_1(D) \setminus \{\alpha_1,\ldots,\alpha_n\}$ is contained in $D_{\mathfrak{p}}$, a contradiction. This proves $R \not \subseteq D_{\mathfrak{p}}$ and so
  there is $i \in \{1,2,\ldots,n\}$ such that $R_i \subseteq D_{\mathfrak{p}}$.  
  
  Let $X(\alpha_i)$ be the set of rings in $X$ that contain $\alpha_i$. If $X(\alpha_i)$ consists of a single ring, then this ring is necessarily $R_i$ and so it is the ring in $X$ that is contained in $D_{\mathfrak{p}}$. Otherwise, since $R_i$ is the intersection of the rings in $X(\alpha_i)$ and $D_{\mathfrak{p}}$ is the localization of $\alpha_i$ at a height one prime,   we may repeat the argument with $D$ replaced by $\alpha_i$ and $X$ replaced by $X(\alpha_i)$. Since $X \subseteq Q_{\leq n}(D)$, after at most $n$ steps, each of which takes us up a level,
this process will produce a ring in $X$ contained in $D_{\mathfrak{p}}$. The converse of the lemma is clear. 
\end{proof}

\begin{theorem} \label{irredundant} 
Let $X $ be a nonempty subset of $Q(D)$. Every  ring in $Q(D)$ containing the intersection of the rings  in $X$ is comparable to a ring in $X$. 
\end{theorem}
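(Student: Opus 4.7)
The plan is to argue by contradiction: suppose $\gamma\in Q(D)$ contains $A:=\bigcap_{\alpha\in X}\alpha$ but is incomparable to every $\alpha\in X$. I will first settle the case that $X$ has bounded level using Theorem~\ref{unique essential} and Lemma~\ref{localization}, and then reduce the general case to it by exploiting the fact that $Q(D)$ is a tree in which every element sits at a finite level.

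\emph{Bounded case.} Suppose $X\subseteq Q_{\leq N}(D)$, and let $X_{\min}$ be the set of inclusion-minimal elements of $X$. Since every $\alpha\in X$ has only finitely many ancestors in $Q(D)$, the set $\{\beta\in X:\beta\subseteq\alpha\}$ is finite and contains a minimal element of $X$; hence $\bigcap X=\bigcap X_{\min}$, and $\gamma$ remains incomparable to every element of $X_{\min}$, which is itself pairwise incomparable. Theorem~\ref{unique essential} applied to the pairwise incomparable set $\{\gamma\}\cup X_{\min}$, with distinguished element $\gamma$, yields a height-one prime $\mathfrak{p}$ of $D$ such that $D_{\mathfrak{p}}\supseteq\gamma$ and $D_{\mathfrak{p}}\not\supseteq\alpha$ for every $\alpha\in X_{\min}$. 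Then $\bigcap X_{\min}\subseteq\gamma\subseteq D_{\mathfrak{p}}$, and since $X_{\min}\subseteq Q_{\leq N}(D)$, Lemma~\ref{localization} produces some $\alpha\in X_{\min}$ with $\alpha\subseteq D_{\mathfrak{p}}$, contradicting the choice of $\mathfrak{p}$.

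\emph{Reduction to the bounded case.} Let $n$ be the level of $\gamma$, with chain $D=D_0\subsetneq\cdots\subsetneq D_n=\gamma$. For each $\alpha\in X$, with chain $D=D_0^{\alpha}\subsetneq\cdots\subsetneq D_{n(\alpha)}^{\alpha}=\alpha$, set $j(\alpha)=\max\{j:D_j^{\alpha}=D_j\}$; incomparability of $\alpha$ with $\gamma$ forces $j(\alpha)<\min\{n,n(\alpha)\}$, so $\alpha^{(1)}:=D_{j(\alpha)+1}^{\alpha}$ is well-defined at level $j(\alpha)+1\leq n$, and $\alpha^{(1)}\neq D_{j(\alpha)+1}$ by maximality of $j(\alpha)$. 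Then $X':=\{\alpha^{(1)}:\alpha\in X\}\subseteq Q_{\leq n}(D)$ is bounded, and $\alpha^{(1)}\subseteq\alpha$ yields $\bigcap X'\subseteq\bigcap X\subseteq\gamma$. Applying the bounded case to $X'$, some $\alpha^{(1)}\in X'$ is comparable to $\gamma$. But $\alpha^{(1)}\subseteq\gamma$ places $\alpha^{(1)}$ on $\gamma$'s chain at its own level, forcing $\alpha^{(1)}=D_{j(\alpha)+1}$; while $\gamma\subseteq\alpha^{(1)}$, combined with $j(\alpha)+1\leq n$ and the fact that two distinct rings at the same level of $Q(D)$ are incomparable, forces $\alpha^{(1)}=\gamma=D_n=D_{j(\alpha)+1}$. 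Either outcome contradicts $\alpha^{(1)}\neq D_{j(\alpha)+1}$, completing the proof.

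The principal obstacle is the apparent gap between Lemma~\ref{localization}, which is stated only for $X\subseteq Q_{\leq N}(D)$, and the hypothesis of the theorem, which allows an arbitrary $X\subseteq Q(D)$. The construction $\alpha\mapsto\alpha^{(1)}$ is the device that closes this gap: it converts the condition ``$\gamma$ has level $n$'' into a uniform level bound on the associated set $X'$, and the branching-off point is engineered so that neither way $\gamma$ can be comparable to $\alpha^{(1)}$ is compatible with $\alpha^{(1)}\neq D_{j(\alpha)+1}$.
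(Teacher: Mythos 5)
Your proof is correct, and at its core it uses exactly the same two tools as the paper's proof: Theorem~\ref{unique essential} applied to a pairwise incomparable family together with the target ring $\gamma$, followed by Lemma~\ref{localization} to produce the contradiction. The genuine difference is in how you reduce to a level-bounded collection. With $n$ the level of $\gamma$, the paper replaces $X$ by
\[
Y = \bigl(X\cap Q_{\leq n}(D)\bigr)\cup\{\beta\in Q_n(D) : \beta \text{ is contained in a ring in } X\},
\]
which satisfies $A(Y)\subseteq A(X)$ and, by construction, is \emph{automatically} incomparable to $\gamma$ (the members coming from $X$ are incomparable to $\gamma$ by hypothesis, and distinct level-$n$ rings in the tree are always incomparable), so the paper can pass immediately to $\min Y$ and apply the two lemmas. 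Your reduction instead clips each $\alpha\in X$ to the branch point $\alpha^{(1)}=D_{j(\alpha)+1}^{\alpha}$ where its chain to $D$ first leaves $\gamma$'s chain; since $X'=\{\alpha^{(1)}\}$ is not a priori incomparable to $\gamma$, you need the closing case analysis showing that either $\alpha^{(1)}\subseteq\gamma$ or $\gamma\subseteq\alpha^{(1)}$ forces $\alpha^{(1)}=D_{j(\alpha)+1}$, contradicting the arranged property $\alpha^{(1)}\neq D_{j(\alpha)+1}$. Both routes feed into the identical bounded-case engine (minimal incomparable elements, then Theorem~\ref{unique essential} and Lemma~\ref{localization}); the paper's choice of $Y$ is marginally leaner because it sidesteps that final dichotomy, while your branch-point device is a sound alternative that makes geometrically explicit where each chain departs from $\gamma$'s path.
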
 

\begin{proof} Suppose $\alpha \in Q(D)$ and  $X$ is a nonempty subset of $Q(D)$ such that $\alpha$ is not comparable to any member in $X$.
  There is  $n >0$ such that $\alpha \in Q_n(D)$. 
    Consider the collection 
    $$Y= (X  \cap Q_{\leq n}(D)) \cup \{\beta \in Q_n(D):  \beta {\mbox{ is contained in a ring in }} X\}.$$ 
     Since $\alpha$ is not contained in any ring in $X$, $\alpha \not \in Y$.  
  Now $A(Y) \subseteq A(X)$, and so to prove $A(X) \not \subseteq \alpha$, it suffices to show $A(Y) \not \subseteq \alpha$. 
  
  Since $Q(D)$ is well-ordered, every ring in $Y$ contains a ring in $Y$ that is minimal in $Y$ with respect to inclusion. The set $\min Y$ of rings that are minimal in $Y$ consists of incomparable rings. Since no ring in $X$, hence no ring in $X \cap Q_{\leq n}(D)$, is contained in $\alpha$, it follows that no ring in $Y$ is contained in $\alpha$. (Note that the only rings in $Y$ that are not in $X \cap Q_{\leq n}(D)$ are in $Q_n(D)$ and hence are not comparable to $\alpha$.)
  Now $A(\min Y) = A(Y)$, so it suffices to show $A(\min Y) \not \subseteq \alpha$.  

Since $\{\alpha\} \cup \min Y$ consists of incomparable rings in $Q(D)$, 
Theorem~\ref{unique essential}
   implies 
  there is an essential valuation ring $V$ of $D$ that contains $\alpha$ but does not contain any ring in $\min Y$. 
  Thus  
 Lemma~\ref{localization} implies $V$ 
does not contain $A(Y) = A(\min Y)$, and so $A(Y) \not \subseteq \alpha$. 
\end{proof}

\begin{corollary} \label{main irred} If $X$ is a set of incomparable two-dimensional regular local overrings of $D$, then the intersection of the rings in $X$ is irredundant.   
\end{corollary}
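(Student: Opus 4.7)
The plan is to derive the corollary as an essentially immediate consequence of Theorem~\ref{irredundant}. Fix $\alpha \in X$ and set $Y = X \setminus \{\alpha\}$; I need to show $A(Y) \ne A(X)$, which (since $A(X) \subseteq \alpha$) is equivalent to showing $A(Y) \not\subseteq \alpha$.

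The key observation is that $\alpha$ itself belongs to $Q(D)$ and contains $A(X) \subseteq A(Y)$ trivially only in the redundant case we wish to rule out. So the strategy is: suppose for contradiction that $A(Y) \subseteq \alpha$. Then $\alpha \in Q(D)$ is a ring in the quadratic tree containing the intersection $A(Y)$ of the rings in the nonempty subset $Y \subseteq Q(D)$ (nonempty provided $|X| \geq 2$; the case $|X| = 1$ is vacuous). By Theorem~\ref{irredundant}, $\alpha$ must be comparable to some ring in $Y$. But $Y \subseteq X$ and the rings of $X$ are pairwise incomparable, so $\alpha$ is incomparable to every member of $Y$, a contradiction.

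Therefore $A(Y) \not\subseteq \alpha$, meaning that removing $\alpha$ from the intersection strictly enlarges it, so $\alpha$ cannot be omitted. Since $\alpha \in X$ was arbitrary, the representation $A(X) = \bigcap_{\alpha \in X} \alpha$ is irredundant.

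There is essentially no obstacle here: the substantive content lies in Theorem~\ref{irredundant}, and the only thing to watch for is the trivial edge case where $X$ is a singleton (in which case irredundance holds vacuously since no ring can be omitted from a one-element intersection without yielding the whole quotient field, and $A(X) = \alpha \ne F$).
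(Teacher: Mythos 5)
Your argument is correct and is exactly the intended deduction: the paper's own proof of this corollary is simply ``This follows from Theorem~\ref{irredundant},'' and your writeup supplies the short contrapositive chase (remove $\alpha$, suppose $A(Y)\subseteq\alpha$, apply Theorem~\ref{irredundant} to get $\alpha$ comparable to a ring in $Y\subseteq X\setminus\{\alpha\}$, contradiction) that the authors left implicit. The singleton edge case is handled the same way.
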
 

\begin{proof} This follows from Theorem~\ref{irredundant}. \end{proof}

Corollary~\ref{main irred} is proved in \cite[Theorem~8.6]{MR4097907} under the additional assumption that $D$ is Henselian. In addition to removing this restriction, the arguments in this section also fill a gap in the proof of Theorem~8.6 in \cite{MR4097907} involving a reduction made at the beginning of the proof. 

In higher-dimensional settings, although the analogous quadratic tree exists, Theorem~\ref{irredundant} fails.
For example, let $D$ be a $3$-dimensional RLR with $\mathfrak{m} = (x, y, z)$.
In the construction above, let $\mathfrak{q}$ be the prime ideal generated by the initial form of $x$.
Denote $D_y = D [\frac{x}{y}, \frac{z}{y}]$ and $D_z = D [\frac{x}{z}, \frac{y}{z}]$.
Then $X$ is the union of the set of localizations of $D_y$ at maximal ideals containing $(y, \frac{x}{y})$ and of localizations of $D_z$ at maximal ideals containing $(y, \frac{x}{z})$,
and the intersection of this ``projective line'' of points in $X$ is the ring $D$.

\section{Local intersections}

Let $x_1,\ldots,x_n \in D$. The
 {\it projective model of $D$} defined by $x_1,\ldots,x_n$  is the collection of local overrings of $D$ of the form 
 $$D\hspace{-.03in
 }\left[{x_1}/{x_i},\ldots,{x_n}/{x_i}\right]_P, {\mbox{ where }} 1 \leq i \leq n {\mbox{ and }}  P \in \Spec  D \hspace{-.03in
 }\left[{x_1}/{x_i},\ldots,{x_n}/{x_i}\right].$$
The Zariski topology of the projective model is defined analogously to that of $\Val(D)$ in the introduction, where now quantification is over the local rings in the model rather than  the valuation rings in $\Val(D)$.  
With this topology and a sheaf structure defined in terms of intersections of rings in open sets, the projective model of $D$ defined by $x_1,\ldots,x_n$ can be viewed as Proj$\:D[(x_1,\ldots,x_n)t],$ the blow up of $D$ along the ideal $(x_1,\ldots,x_n)$.  

Given two projective models $X$ and $Y$ of $D$, $Y$ {\it dominates} $X$ if every local ring in $X$  is dominated by a local ring in $Y$ and every local ring in $Y$ dominates a local ring in $X$. The {\it domination map} is the surjective mapping $d_{Y,X}:Y \rightarrow X$ that sends a local ring in $Y$ to the unique local ring in $X$ that it dominates. This map (which induces  a projective morphism of projective schemes over $D$) is continuous and closed in the Zariski  topology \cite[Lemma~4, p.~117]{MR0389876}. To simplify notation, if $Y'$ is a subset of $Y$, we write $X(Y')$ for the image  $d_{Y,X}( Y')$ of $Y'$ in $X$, and so $X(Y')$ is the collection of rings in $X$ dominated by the rings in $Y'$. We use this notation also when $Y'$ is a subset of the Zariski-Riemann space $\Zar(D)$ of $D$. In this case, we write simply $d_X$ for the domination map $d_{\Zar(D),X}:\Zar(D)\rightarrow X.$

If $X$ is a collection of local overrings rings of $D$, denote by  $A(X)$ the intersection of rings in $X$.
It follows from a more general result in   \cite[Theorem~4.1]{HLOT}
that if $X$ is a projective model of $D$ and $Y$ is a  closed connected subset of the set of local rings in $X$ that dominate $D$, then $A(Y)$ is a  local   ring that is dominated by each ring in $Y$. For this assertion, $D$ need not be assumed to be two-dimensional or even Noetherian. (All that is needed is that Spec$\:D$ is a Noetherian space.) 
We next prove a converse to this theorem, but for the converse the fact that $D$ is a two-dimensional regular local ring is crucial to our arguments. 

\begin{theorem} \label{Henselian lemma} Let  $X$ be a nonsingular projective model over $D$, and let ${X'}$ be a closed set of local rings in $X$, all of which  dominate $D$.  The intersection of the rings in $X'$ is a local ring if and only if ${X'}$ is connected in the Zariski topology.
\end{theorem}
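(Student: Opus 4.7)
The \textbf{if} direction is \cite[Theorem~4.1]{HLOT}, which is quoted in the paragraph immediately preceding the theorem (take $Y = X'$). So only the \textbf{only if} direction needs attention.

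I would argue the only-if by contrapositive. Suppose $X' = X_1 \sqcup X_2$ with $X_1, X_2$ both nonempty and closed in $X$; I aim to show that $A(X')$ is not local. Since $X$ is a Noetherian topological space, I may pass to connected components and assume additionally that $X_1$ and $X_2$ are each closed and connected, in which case the already-available \textbf{if} direction guarantees that $A(X_1)$ and $A(X_2)$ are local rings and that $A(X') = A(X_1) \cap A(X_2)$.

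The core plan is then to produce an element $f \in F$ with three properties: (i) $f \in R$ for every $R \in X'$; (ii) $f \in \mathfrak{m}_R$ for every $R \in X_1$; and (iii) $f - 1 \in \mathfrak{m}_R$ for every $R \in X_2$. Given such an $f$, both $f$ and $1 - f$ lie in $A(X')$, and neither is a unit there: the reciprocal $1/f$ would have to belong to each $R \in X_1$, contradicting (ii), and similarly $1/(1-f)$ cannot lie in any $R \in X_2$ by (iii). Yet $f + (1 - f) = 1$, a unit. In a local ring the non-units form an ideal, which is incompatible with this, so $A(X')$ cannot be local.

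The main obstacle, and essentially the only substantive step, is the construction of such an $f$. Because $X$ is projective over $D$ and $X_1, X_2$ are disjoint closed subsets, their ideal sheaves are comaximal. I would appeal to Serre vanishing: applied to the short exact sequence $0 \to \mathcal{I}_{X_1} \cap \mathcal{I}_{X_2} \to \mathcal{I}_{X_1} \oplus \mathcal{I}_{X_2} \to \mathcal{O}_X \to 0$, after twisting by a sufficiently high power of an ample line bundle on $X$, it provides, for large $n$, a global section $s$ of $\mathcal{O}_X(n)$ nonvanishing on the closed set $X'$, together with a decomposition $s = s_1 + s_2$ where $s_i$ vanishes on $X_i$; then $f := s_1/s \in F$ has properties (i)--(iii). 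A more intrinsic construction, staying within the methods of Section~3, would presumably proceed by choosing incomparable two-dimensional regular local overrings $\alpha_1 \in X_1$ and $\alpha_2 \in X_2$ (which exist because each $X_i$ is closed and nonempty, hence contains closed points of $X$) and manufacturing the separating element from the directing height-one primes furnished by Theorem~\ref{unique essential} together with the irredundance in Corollary~\ref{main irred}; this seems feasible but more delicate than the algebro-geometric route, which is why I expect the cohomological argument to be the cleanest way in.
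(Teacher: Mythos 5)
The logical skeleton of your contrapositive is sound, and your reduction of everything to producing $f\in F$ regular on $X'$ with $f$ vanishing on $X_1$ and $f-1$ vanishing on $X_2$ is a genuinely different strategy from the paper's. (The paper argues the direction ``$A(X')$ local $\Rightarrow X'$ connected'' directly: it realizes $A$ as a closed point of a normal projective model $Y$, forms the join $W$ of $X$ and $Y$, proves $X'$ is exactly the fiber $d_{W,Y}^{-1}(\{A\})$, and then invokes Zariski's Connectedness Theorem. In effect, the paper gets the non-existence of a separating idempotent for free from Zariski, instead of building a separating element by hand.)

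The gap in your proposal is in the cohomological construction of the separating element, and it is not cosmetic. Since all rings in $X'$ dominate $D$, the set $X'$ sits inside the closed fiber $X_0$ of the proper map $X\to\Spec D$. The hypothesis of the theorem allows $X'$ to contain a one-dimensional local ring $R$ (a divisorial valuation ring of $D$ that appears as the generic point of an irreducible component $C$ of $X_0$); since $X'$ is closed, it then contains the entire projective curve $C=\overline{\{R\}}$. For $f=s_1/s$ to lie in every local ring of $X'$, the section $s\in H^0\bigl(X,\mathcal O_X(n)\bigr)$ must be nowhere-vanishing on $C$. But $\mathcal O_X(n)$ is ample, so $\mathcal O_X(n)\vert_C$ has positive degree on the projective curve $C$ over the residue field $k$, hence every global section of it has a zero. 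Thus no such $s$ exists once $n>0$, and $n=0$ is useless because $H^0(X,\mathcal O_X)=D$ maps only to $k=D/\mathfrak m_D$ inside $H^0(X',\mathcal O_{X'})$, far from enough to hit $(0,1)$. So the Serre-vanishing argument only covers the case when $X'$ consists of finitely many closed points, which is precisely the easy case; the ``intrinsic'' alternative you sketch via Theorem~\ref{unique essential} and Corollary~\ref{main irred} is not developed and would need the same information that the paper extracts from Zariski's Connectedness Theorem. As written, the only-if direction is therefore not established.
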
 

\begin{proof} 
Let $A = A({X'})$. If ${X'}$ is connected, then $A$ is local by the result cited before the theorem. 
To prove the converse, suppose $A$ is local. We show $X'$ is connected.   If $X'$ consists of only one ring, then clearly $X'$ is connected, so we assume for the rest of the proof that $X'$ contains more than one ring. 
We first claim $A$ is a  two-dimensional normal local ring that is essentially of finite type over $D$.  The closed points in $X'$ are the minimal elements of $X'$ with respect to set inclusion.  Thus $A$ is the intersection of the rings in $X'$ that are closed points in $X'$ and hence in $X$ since $X'$ is closed in $X$. Since $X$ is a nonsingular projective model of $F/D$, these closed points are two-dimensional regular local rings, and so $A$ is an intersection of two-dimensional regular local overrings in a projective model over $D$.  Therefore, 
by \cite[Theorem 7.2]{MR4097907}, $A$ is a two-dimensional normal Noetherian local ring that is essentially of finite type over $D$. 

Since $A$ is essentially of finite type over $D$, $A$ is a local ring in a normal projective model $Y$ over $D$. Let $W$ be the join of the projective models $X$ and $Y$, as defined in \cite[p.~120]{MR0389876}. (As a projective scheme, $W$ is the fiber product of $X$ and $Y$ over $\Spec(D)$.) Then $W$ dominates $X$ and $Y$, and we may consider the domination map $d_{W,Y}:W\rightarrow Y$ defined before the theorem. 
We will show 
\begin{enumerate} 
\item[(a)] $X'$ is a subset of $W$ that is closed in the Zariski topology, and 
\item[(b)] ${X'} = d_{W,Y}^{-1}(\{A\})$, i.e.~$X'$ is the set of local rings in $W$ that dominate $A$.
\end{enumerate}
Once the last assertion is proved, Zariski's Connectedness Theorem \cite[Corollary~11.3, p.~279]{MR0463157} and the fact that $A$ is integrally closed imply $X'$, as the fiber of $d_{W,Y}:W\rightarrow Y$ over the point $A \in Y$, is connected. 
Thus 
to prove the lemma, it remains to verify (a) and~(b). 

 To prove (a) it suffices to show $X'$ is a subset of $W$. This is because if
 $X'$ is a subset of $W$, 
then by properness (cf.~\cite[Lemma~4, p.~117]{MR0389876}), the only rings in $W$ dominating a ring in $X'$ are the rings in $X'$, so that 
 $X'= d_{W,X}^{-1}(X')$. Thus since $X'$ is closed in $X$ and $d_{W,X}$ is continuous, $X'$ is closed in $W$. Thus to verify (a), we show $X'$ is a subset of $W$.    
 
 Let $\alpha \in {X'}$.  
The local ring $\alpha$ lies in an affine submodel of $X$, and so there is a finitely generated  normal overring $R_1$  of $D$ such that $\alpha$ is a localization of $R_1$ and every localization of $R_1$ at a prime ideal is in $X$. (That we can assume $R_1$ is both normal and finitely generated follows from the fact that the normalization of a finitely generated overring of a regular local ring is finitely generated; see \cite{MR0126465}.)
Similarly, since  $A  \in Y$ there is a finitely generated normal overring $R_2$ of $D$ such that $A $ is a localization of $R_2$ at a prime ideal and every localization of $R_2$ lies in $Y$.  Each localization of the ring $R_1[R_2]$ at a prime ideal lies in the projective model $W$ \cite[Lemma 6, p.~120]{MR0389876}. 
Since  $A \subseteq \alpha$, we have $R_1 \subseteq R_1[R_2]  \subseteq R_1[A] \subseteq \alpha$. Thus, since $\alpha$ is a localization of $R_1$, 
$\alpha$ is a localization of $R_1[R_2]$, and so   $\alpha \in W$.  This shows ${X'} \subseteq W$, and hence (a) is verified. 

To prove (b), we must show   that $X'$
 is the set of local rings in $W$ dominating $A$. 
By the discussion preceding the statement of Theorem~\ref{Henselian lemma}, the local ring $A'$, as the  intersection of the rings in $X'$,  is dominated by all the rings in $X'$. 
Conversely, suppose 
  $\alpha \in W$ dominates $A$. We show $\alpha \in X'$.  
  Now $W$ dominates $Y$ and $A$ is a closed point in $Y$, so by Zariski's Connectedness Theorem, the set $W':=d^{-1}_{W,Y}(\{A\})$ of local rings  in $W$ dominating $A$ is connected. If the ring $\alpha$ is a closed point in $W'$ that is not contained in any other ring in $W'$, 
 then 
 $\alpha$ is an isolated point in $W'$, contrary to the fact that $W'$ is connected and contains more than one point (since $X' \subseteq W'$ and by assumption $X'$ contains more than one point).  
  Thus $\alpha$ is either a closed point in $W'$, hence a closed point in $W$, or $\alpha$ contains  a local ring in $W'$ that is a closed point in $W'$. 
As a step in proving $\alpha \in X'$, we prove first that 
  every two-dimensional local ring $\beta$ in $W$ that dominates $A $ and is contained in $\alpha$ is in $X'$. To this end, let $\beta $ be such a ring.

  Since every projective model over $D$ can be desingularized \cite{MR491722}, there is a two-dimensional regular local overring $\gamma$ of $D$  that dominates $\beta$ and hence dominates $A$. Since $X'$ is closed and, as discussed at the beginning of this section, the closed points of a projective model over $D$ all have dimension $2$,  $A=A(X')$ is the intersection of the two-dimensional local rings in $X'$. Each such ring is regular since $X$ is nonsingular. Thus $A$ is the intersection of the two-dimensional regular local rings in $X'$.  
 By Theorem~\ref{irredundant},
  each  local ring in $Q(D)$ that dominates $A$ is comparable to  a local ring in ${X'}$. 
 Thus $\gamma$ is comparable to a two-dimensional regular local ring $\delta$ in $X' \subseteq W$. 

 \smallskip

{\textsc{Case 1:}} $\delta \subseteq \gamma$. To verify this case, note that 
   as a two-dimensional local ring containing the two-dimensional local Noetherian domain $\delta$, $\gamma$ must dominate $\delta$. But $\gamma$ also dominates the ring $\beta \in W$. Let $V$ be a valuation ring of $F/D$ dominating $\gamma.$ Then $V$ dominates $\beta$ and $\delta$, and since  $\beta$ and $\delta$ are in the same projective model $W$, properness (cf.~\cite[Lemma~4, p.~117]{MR0389876}) implies  
    $\beta = d_W(V) = \delta$ and hence 
    $\beta \in {X'}$. 

\smallskip

{\textsc{Case 2:}} $\gamma \subseteq \delta$.
    In this case, $\delta$ dominates $\gamma$, which by assumption, dominates $\beta$. 
    Since $\delta \in X' \subseteq W$ and also  $\beta \in W' \subseteq W$, $\delta = \beta$ 
   since $\delta$ dominates $\beta$ and both rings are in the same projective model.  In this case also, $\beta \in X'$. 

   \smallskip

   Having verified that $\beta \in X'$ in all cases, 
we conclude that every two-dimensional  local ring   in $W$ that dominates $A $ and is contained in $\alpha$ is in $X'$.

   We now show  $\alpha$ is in $X'$. If $\dim(\alpha)=2$, then since $\beta \subseteq \alpha \in W$ and $\dim(\alpha)=\dim(\beta)$, we have  $\alpha = \beta$, so that $\alpha \in {X'}$. Otherwise, if $\dim(\alpha)=1$, then  every two-dimensional local ring in $W$ that is contained in $\alpha$ dominates $A$ since $\alpha$ dominates $A$.  We have shown that each such local ring is in ${X'}$. Since ${X'}$ is closed in $W$ and every local ring in $W$ contained in $\alpha$ is in ${X'}$, it follows that $\alpha$ is a generic point for a closed subset of $X'$, and hence $\alpha$ is in the closed set $X'$.  In all cases, $\alpha \in X'$, so  
this proves the claim that  ${X'}$ is the set of local rings in $W$ dominating $A$.  This verifies (b) and   completes the proof of the 
theorem.
\end{proof}

The next theorem differs from Theorem~\ref{VCT} in that the latter asserts that if $A$ is an integrally closed local overring of $D$, then 
{\it some}    connected subset $Z$ of $\Val(A)$ satisfies $A = A(Z)$. In Theorem~\ref{two dim}, it is shown that {\it every} subset $Z$ for which $A =A(Z)$ is connected. In this theorem and the next theorem, we use the  patch topology  discussed in (2.2).

\begin{theorem} \label{two dim}  \label{two dim cor}
If  ${Z}$ is a  subset of ${\rm Val}(D)$ such that $A({Z})$ is local, then ${Z}$ is   connected.
\end{theorem}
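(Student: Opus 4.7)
The plan is to argue by contradiction, reducing to Theorem~\ref{Henselian lemma} via nonsingular projective models over $D$. Suppose $A := A(Z)$ is local but $Z$ admits a disconnection $Z = Z_1 \sqcup Z_2$ into nonempty relatively open subsets in the Zariski topology; write $Z_i = Z \cap U_i$ for Zariski-open subsets $U_1, U_2 \subseteq \Val(D)$ with $Z \subseteq U_1 \cup U_2$ and $Z \cap U_1 \cap U_2 = \emptyset$. Throughout, I will exploit that Zariski-open subsets of $\Val(D)$ are closed under generization, i.e., if $V \in U_i$ and $V \subseteq V'$, then $V' \in U_i$.

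First, I would record some reductions. The ring $A$ dominates $D$: if $a \in \mathfrak{m}_D$ then $a \in \mathfrak{m}_V$ for every $V \in Z$, so $1/a \notin V$ for every $V$, whence $1/a \notin A$ and $a \in \mathfrak{m}_A$. The cases where $A$ is an essential or divisorial valuation ring are handled separately using Lemma~\ref{PrimeLemma} and (2.3). Otherwise, by (2.1), $A$ is residually algebraic over $D$, and Theorem~\ref{VCT} applied with $A$ in place of $D$ produces a closed connected set $\tilde Z := \Val_F(A) \subseteq \Val(D)$ with $A(\tilde Z) = A$. Residual algebraicity forces every prime of $A$ lying over $\mathfrak{m}_D$ to equal $\mathfrak{m}_A$, so each $V \in Z$ (whose center $\mathfrak{m}_V \cap A$ lies over $\mathfrak{m}_D$) dominates $A$, giving $Z \subseteq \tilde Z$.

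The core of the argument is then to show that the disconnection of $Z$ propagates to one of $\tilde Z$, contradicting its connectedness. For the covering $\tilde Z \subseteq U_1 \cup U_2$, I would produce for each $V \in \tilde Z$ some $V' \in Z$ with $V' \subseteq V$, using a patch-compactness argument based on $A(Z) = A$; generization-closure of $U_i$ then places $V$ in $U_1 \cup U_2$. The disjointness $\tilde Z \cap U_1 \cap U_2 = \emptyset$ is the crucial difficulty: a $V$ in the intersection admits $V' \in Z$ with $V' \subseteq V$ and $V' \in U_1 \setminus U_2$ (say), yet up-closure of $U_2$ is consistent with $V' \notin U_2$, so generization alone is insufficient. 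To close this gap, I would pass to nonsingular projective models $X$ of $D$ and apply Theorem~\ref{Henselian lemma} to the closed subset $\overline{d_X(\tilde Z)} \subseteq X$ (whose intersection is a localization of $A$, hence local), combined with the irredundance of intersections of two-dimensional regular local overrings (Corollary~\ref{main irred}) to trace any overlap on $\tilde Z$ in $U_1 \cap U_2$ back to an overlap already in $Z \cap U_1 \cap U_2$, contradicting the hypothesis.

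The main obstacle is precisely this last disjointness step: while the up-closure property of Zariski-open sets handles the covering $\tilde Z \subseteq U_1 \cup U_2$ naturally, ruling out an overlap on the canonical superset $\tilde Z$ requires the finer projective-model and irredundance machinery developed in Section~3, which is why those results were established in that generality.
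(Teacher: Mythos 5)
Your strategy — argue by contradiction by propagating a hypothetical disconnection of $Z$ up to the closed connected set $\Val(A)$ — diverges from the paper's proof and has a genuine gap that you yourself identify but do not close. The paper instead proceeds directly: it first reduces to the case where $Z$ is Zariski (hence patch) closed by invoking two lemmas from \cite{HLOT} ($Z$ is connected iff $\overline{Z}$ is, and residual algebraicity forces $\overline{Z} \subseteq \Val(A)$ so that $A(\overline{Z}) = A$); it then applies \cite[Lemma~3.6]{HLOT}, which says a patch closed subset of $\Val(D)$ is connected iff its image in every projective model of $F/D$ is connected; finally, for each model $X$ it takes a desingularization $X^*$, notes $A(X^*(Z))$ is local by \cite[Lemma~3.3(2)]{HLOT} and $X^*(Z)$ is closed because the domination map is closed, and applies Theorem~\ref{Henselian lemma} to get $X^*(Z)$ connected, whence $X(Z)$ is connected as a continuous image. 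No contradiction or disconnection-propagation is involved.

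Your proposal has two concrete problems. First, the ``covering'' step — that every $V \in \Val(A)$ contains some $V' \in Z$, which you say follows ``by a patch-compactness argument'' — is false in general. For instance with $A = D$, take $Z$ to consist only of divisorial valuation rings of $D$ (their intersection is $D$); a minimal valuation overring of $D$ belongs to $\Val(A)$ but contains no member of $Z$. The right framing, used in the paper, is to pass to the Zariski closure $\overline{Z}$, which adds the rings \emph{below} the rings in $Z$, rather than arguing about $\Val(A) \supseteq Z$. Second, the disjointness step is, as you acknowledge, unresolved: you gesture toward Theorem~\ref{Henselian lemma} and Corollary~\ref{main irred} ``to trace any overlap on $\tilde Z$ in $U_1 \cap U_2$ back to an overlap already in $Z$,'' but this is a hope, not an argument — it is precisely the kind of step the paper's route avoids by reducing to closed sets and projective models before invoking Theorem~\ref{Henselian lemma}. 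Also, the side case you mark as ``handled separately using Lemma~\ref{PrimeLemma} and (2.3)'' is actually dispatched in the paper by a one-line observation: if $A$ is not residually algebraic then $A$ is a valuation ring by (2.1), and $Z$, being a set of overrings of the valuation ring $A$, is totally ordered by inclusion, hence connected; neither Lemma~\ref{PrimeLemma} nor (2.3) is needed there.
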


\begin{proof} 
Suppose  $A = A({Z})$ is local. If $A$ is not residually algebraic, then $A$ is a valuation domain by (2.1), and so $Z$ consists of overrings of $A$. Thus $Z$ is connected since these overrings of $V$ are totally ordered with respect to inclusion. 
Now suppose $A$ is residually algebraic over $D$. By \cite[Lemma~3.4]{HLOT}, to prove that $Z$ is connected, it suffices to show the Zariski closure $\overline{Z}$ of $Z$ is connected. Also, by \cite[Proposition~3.2]{HLOT}, the fact that $A$ is residually algebraic over $D$ implies $\overline{Z} \subseteq \Val(A)$, 
so 
$
A = A(Z) = A(\overline{Z})$. In this way, we can reduce to the case in which $Z$ is closed.

Suppose first that $A = A({Z})$ is local. By \cite[Lemma~3.6]{HLOT}, a nonempty patch closed  subset of $\Val(D)$ is connected if and only if its image in  each projective model  of $F/D$ is connected, so we will show the image of $Z$  in each projective model of $F/D$ is connected. 
Let $X$ be a projective model of $F/D$, and let $X^*$ be a desingularization of $X$, which exists by \cite{MR491722}.  Since $A$ is a local ring, so is $A(X^*({Z}))$ by \cite[Lemma~3.3(2)]{HLOT}. Also, since the domination map Zar$(D) \rightarrow X$ is closed, the image $X^*({Z})$ of ${Z}$ in $X^*$ is closed in $X^*$. Thus  $X^*({Z})$ is connected by Theorem~\ref{Henselian lemma}. Now $X(Z)$ is the image of $X^*(Z)$ under the domination map $X^* \rightarrow X$, and so, as the continuous image of a connected set, $X(Z)$ is connected, which proves $Z$ itself is connected 
  since 
the choice of projective model $X$ was arbitrary.  The converse follows from Theorem~\ref{is local}. 
\end{proof}

If $A$ is an integrally closed local overring of $D$, then $\Val(A)$ is a patch closed subset of $A$  and so $\Val(A)$ is a patch closed representation of $A$. 
The next theorem shows $\Val(A)$ is the only patch closed  representation of $A$, and hence also the only  Zariski closed representation of $A$. This is a strong  property that is special to our setting, as Example~\ref{3} illustrates.

\begin{theorem} \label{pre vacant} Let $A$ be an integrally closed local overring of $D$.   The only  patch closed  subset $Z$ of $\Val(A)$ with $A = A(Z)$ is   $\Val(A)$ itself.
\end{theorem}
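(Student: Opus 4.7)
If $A$ is itself a valuation ring then any valuation overring of $A$ dominating $A$ is a localization of $A$ at a prime containing $\mathfrak m_A$ and hence equals $A$, so $\Val(A) = \{A\}$ and the conclusion is immediate. I therefore assume $A$ is not a valuation ring, so that by~(2.1), $A$ is residually algebraic over $D$ and is neither essential nor divisorial.

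Let $\bar Z$ denote the Zariski closure of $Z$ in $\Val(D)$. By~(2.2), $\bar Z = \{V \in \Val(D) : V \subseteq U \text{ for some } U \in Z\}$. The residual algebraicity of $A$ combined with \cite[Proposition~3.2]{HLOT} (as applied in the proof of Theorem~\ref{two dim}) yields $\bar Z \subseteq \Val(A)$; since every $V \in \bar Z$ then contains $A$, one has $A \subseteq A(\bar Z) \subseteq A(Z) = A$, so $A(\bar Z) = A$. By Theorem~\ref{two dim}, $Z$ is connected, whence $\bar Z$ is Zariski closed and connected. Theorem~\ref{nullstellensatz} now forces $\bar Z = \Val(A)$, and the proof reduces to showing $Z = \bar Z$.

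Suppose for contradiction that there is $V \in \bar Z \setminus Z$; then $V \subsetneq U$ for some $U \in Z$. Since $F \notin \Val(A)$ and a rank-$1$ valuation ring of $F$ has only $F$ as a proper valuation overring, $V$ must have Krull dimension $2$ and $U = V_P$, the localization of $V$ at its height-$1$ prime. By Abhyankar's inequality, the residue field of $V_P$ has transcendence degree $1$ over $D/\mathfrak m_D$, so $V_P$ is divisorial over $D$; by~(2.3), $V_P$ is a patch limit of every infinite subset of the rank-$2$ valuation overrings of $D$ contained in $V_P$, and all such rank-$2$ valuations lie in $\Val(A)$ since their maximal ideals contain $\mathfrak m_{V_P} \supseteq \mathfrak m_A$.

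The remaining step, which I expect to be the main obstacle, is to turn this limit-theoretic picture into a contradiction. The plan is: patch closedness of $Z$ produces a basic patch open neighborhood $O = \{W : x_1,\dots,x_n \in W,\, y \notin W\}$ of $V$ disjoint from $Z$. Because $V \subseteq V_P$ and $V_P \in Z \setminus O$, the separator $y$ must lie in $V_P \setminus V$, hence $y \notin A$. For each $W \in Z$ with $y \notin W$, the ring $V$ is not contained in $W$ (the only overring of $V$ in $\Val(A)$ is $V_P$, which contains $y$), so $V \setminus W$ is nonempty; patch compactness of the closed set $Z \cap \{W : y \notin W\}$ allows finitely many such separators $x_i \in V$ to be chosen. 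The concluding step is to combine this separating data with the description $V = V_P \cap W'$, where $W'$ is a DVR of the residue field of $V_P$ containing the image of $A$, to manufacture an element of $\bigcap_{W \in Z} W = A$ that does not lie in $V$, contradicting $A \subseteq V$.
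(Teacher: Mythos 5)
Your proposal has two serious problems, one of which is fatal as written.

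First, there is a circularity: you invoke Theorem~\ref{nullstellensatz} to conclude that the Zariski closed connected set $\bar Z$ with $A(\bar Z) = A$ must equal $\Val(A)$. But in the paper the injectivity half of Theorem~\ref{nullstellensatz} (that $Z = \Val(A(Z))$ for a Zariski closed connected $Z$) is \emph{deduced from} Theorem~\ref{pre vacant}, so you cannot use it here. The statement you actually need at this point --- that no proper Zariski closed subset of $\Val(A)$ can intersect to $A$ --- is exactly Claim~1 in the paper's proof and has to be established directly. It can be: one shows that every minimal valuation overring in $\Val(A)$ must lie in $\bar Z$ (else adjoining one would give a disconnected closed set with local intersection, contradicting Theorem~\ref{two dim}), and then (2.2) and (2.3) force every higher valuation ring in $\Val(A)$ into $\bar Z$. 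But as written, your reduction rests on a result whose proof depends on the theorem you are trying to prove.

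Second, and more importantly, the reduction to showing $Z = \bar Z$ is exactly the hard content of the theorem, and your sketch does not close it. The separating-element plan you outline --- find $y \in V_P \setminus V$, compactness to get finitely many $x_i$, then ``manufacture an element of $\bigcap_{W \in Z} W = A$ not in $V$'' --- is not carried out, and there is strong reason to doubt it can be carried out along those lines: nothing in that plan uses either the Noetherianity of a suitable auxiliary intersection or the two-dimensionality of $D$, yet Example~\ref{3} in the paper exhibits, over a three-dimensional base, a proper patch closed $Z \subsetneq \Val(A)$ with $A(Z) = A$. A purely topological separation argument of the kind you describe would be dimension-independent and would therefore ``prove'' a false statement in that example. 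The paper's proof avoids this trap by a fundamentally different route: it first handles the Noetherian case (Claim~2) using the patch density of the divisorial valuation rings of a two-dimensional Noetherian normal local domain in $\Val(A)$, and then reduces the general case to the Noetherian one (Claim~3) by pushing $Z$ down to nonsingular projective models $X$, showing $A(X(Z))$ is Noetherian (via Krull overrings of two-dimensional Noetherian domains being Noetherian), recovering $d_X^{-1}(X(Z)) = \Val(A(X(Z)))$ as Zariski closed, and then writing $Z = \bigcap_X d_X^{-1}(X(Z))$ to conclude $Z$ itself is Zariski closed, at which point Claim~1 finishes. You will need some version of that machinery; the local separating-element picture around a single $V \subsetneq V_P$ is not enough.
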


\begin{proof} 
We can assume $A$ is not a  valuation ring since otherwise $\Val(A) = \{A\}$ and the assertion of the theorem clearly holds in this case. 

\medskip

\noindent
{\textsc{Claim 1}}: {\it There does not exist a proper Zariski closed subset $Z$ of $\Val(A)$ such that $A=A(Z)$. }

\medskip

Since $A$ is not a valuation ring, $A$ is residually algebraic over $D$ by {(2.1)}. Thus $\Val(A)$ is a Zariski closed subset of $\Val(D)$ by Theorem~\ref{is local}. 
Let $Z$ be a Zariski closed subset of $\Val(A)$ such that $A = A(Z)$. We will show $Z=\Val(A)$.
To this end, we first observe that every minimal valuation overring of $D$ in $\Val(A)$ is in $Z$. For if $V$ is a minimal valuation overring of $D$ in $\Val(A) \setminus Z$, then 
$Z':=Z \cup \{V\}$ is a disconnected closed subset of $\Val(D)$ for which $A = A(Z')$. However, Theorem~\ref{two dim} implies $Z'$ is connected since $A$ is local, a contradiction that implies $V \in Z$.
Thus every minimal valuation overring of $D$ in $\Val(A)$ is in $Z$.

To prove $Z = \Val(A)$, it remains to show that if $V \in \Val(A)$ is not a minimal valuation overring of $D$, then $V \in Z$. 
We have observed already that $\Val(A)$ is  closed in $\Val(D)$, so every minimal valuation overring of $D$ contained in $V$ is in $\Val(A)$. By (2.3), there are infinitely many minimal valuation overrings of $D$ contained in $V$, and by what we have shown, these minimal valuation overrings are in $Z$. By 
(2.2), 
 $V$ is in $Z$ since it is in the closure of the set of minimal valuation overrings of $D$ contained in $V$ and $Z$ is a Zariski closed set. 
With this, we conclude $Z = \Val(A)$. \qed

\medskip

\noindent {\textsc{Claim 2}}:
{\it The theorem holds if $A$ is Noetherian; i.e., if $A$ is Noetherian, the only patch closed subset $Z$ of $\Val(A)$ such that $A = A(Z)$ is $ \Val(A)$. }

\medskip

Under the assumption that $A$ is Noetherian, let $Z$ be a patch closed subset of $\Val(A)$ such that $A = A(Z)$.  
By (2.2), the Zariski closure $\overline{Z}$ of  $Z$ in $\Val(D)$ is \begin{center} $\overline{Z} = \{V \in \Val(D):V \subseteq U$ for some $U \in Z\}.$
\end{center} Since $A$ is not a valuation ring, $A$ is residually algebraic over $D$ by (2.1). By (2.2), $A = A(\overline{Z})$, and so 
 $\Val(A) = \overline{Z}$ by 
Claim 1.
 Thus 
  every  divisorial valuation ring of $D$ containing  $A$
  is in $Z$ since these rings are maximal in $\Val(A)$ with respect to set inlcusion. Since $A$ is Noetherian, the set of such valuation overrings of $A$ is patch dense in $\Val(A)$. (Since $\dim(A) \leq 2$, this follows, for example, from \cite[Lemma~3.3]{MR4186452}.)
  Thus  since $Z$ is patch closed in $\Val(A)$, we conclude  $Z = \Val(A)$. \qed

\medskip

\noindent {\textsc{Claim 3}}:
{\it Regardless of whether $A$ is Noetherian,  the only patch closed subset $Z$ of $\Val(A)$ such that $A = A(Z)$ is $\Val(A)$. }

\medskip

Assume $A$ is a not-necessarily-Noetherian integrally closed local overring of $D$.
Let $Z$ be a  patch closed  subset of $\Val(D)$ such that $A = A(Z)$. By Theorem~\ref{two dim}, $Z$ is Zariski connected. 
Let $X$ be a nonsingular projective model over $D$. 
Since $Z$ is connected and patch closed, so is its image  $X(Z)$ (see \cite[Remark 3.2]{MR3299704}). 
We will show $X(Z)$ is Zariski closed, but before proving this, we establish that $A(X(Z))$ is a  Noetherian local ring.

Since $A(Z)$ is a local ring, so is 
 $A(X(Z))$ \cite[Lemma~3.3(2)]{HLOT}.
 Since  $X(Z)$ consists of  local rings in a nonsingular projective model over $D$, the intersection of the closed points in $X(Z)$  is   Noetherian  by \cite[Theorem~7.4]{MR4097907}. Since $X$ is a projective model and the rings in $X(Z)$ dominate $D$, there are at most finitely many local rings  in $X(Z)$ that are not closed points. These rings are DVRs (they are in fact divisorial valuation rings of $D$), so $A(X(Z))$, as an intersection of an integrally closed Noetherian domain (namely, the intersection of the local rings that are the closed points in $X(Z)$) and finitely many DVRs, is a Krull domain. Every Krull overring of a two-dimensional Noetherian domain is Noetherian \cite[Theorem~9]{MR0254022}, so $A(X(Z))$ is a Noetherian domain.

 Returning to the proof that $X(Z)=d_X(Z)$ is   closed, we claim that
  $d_X^{-1}(X(Z))$ is a patch closed subset of $\Val(D)$. Since $d_X$ is continuous in the patch topology (see \cite[Remark~3.2]{MR3299704}), it suffices to verify that $X(Z)$ is patch closed in $X$. 
  And   this is indeed the case because $d_X$  is  continuous in the patch topology, $\Zar(D)$ is compact in the patch topology and  $X$ is Hausdorff in the patch topology, so that the image under $d_X$ of every patch closed subset of $\Zar(D)$ is patch closed in $X$. 

  We have already proved that $A(X(Z))$ is a 
 Noetherian local domain. By Claim~2,
using the fact that $d_X^{-1}(X(Z))$ is patch closed, we have  
 that $d_X^{-1}(X(Z))$ is the set $\Val(A(X(Z)))$, and so by Theorem~\ref{is local}, $d_X^{-1}(X(Z))$  is a Zariski closed subset of $\Val(D)$. Therefore, $X(Z)$, as the image of this set under the closed surjective map $d_X$, is also Zariski closed. 
Since $Z$ is patch closed, $Z = \bigcap_{X}d_X^{-1}(X(Z))$, where $X$ ranges over the nonsingular projective models  over $D$. (This follows from a standard topological argument; see the footnote in the proof of Lemma~3.5 in \cite{HLOT}.)   Since we have shown that for each nonsingular projective model $X$ of $D$, $X(Z)$ is Zariski closed, the fact that $d_X$ is continuous implies the set $d_X^{-1}(X(Z))$
is Zariski closed in $\Val(D)$.  Thus $Z$, as an intersection of Zariski closed sets, is Zariski closed. By Claim~1, the only Zariski closed subset of $\Val(A)$ whose members intersect to $A$ is $\Val(A)$, so $Z = \Val(A)$, which proves Claim~3 and hence completes the proof of the theorem.
\end{proof} 

\begin{example} \label{3} This example shows that the conclusion of Theorem~\ref{pre vacant} need not hold if we work over a ring of dimension higher than $2$. Let $k$ be a field, and let $x,y,z$
be indeterminates for $k$. Let 
\begin{itemize}
\item  $U = k(x,y)[z]_{(z)}$;

\item ${\mathfrak{M}}_U =$   maximal ideal of $U$;

\item $A = k + {\mathfrak{M}}_U$;

\item $\{V_\alpha\} =$  the  localizations of $k[x,y]$ at height one prime ideals;

\item $W_\alpha = V_\alpha + {\mathfrak{M}}_U$ for
each $\alpha$;

\item 
$V_1 = k[1/x,y]_{(1/x)}$ and $V_2 = k[x,1/y]_{(1/y)};$   and
\item
$W_1 = V_1 + {\mathfrak{M}}_U$ and $W_2 = V_2 + {\mathfrak{M}}_U$. 
\end{itemize}
Then $W_1,W_2$ and all the $W_\alpha$ are valuation rings contained in $U$ and have  quotient field $k(x,y,z)$. 
 Also, $A$ is an   integrally closed local domain with maximal ideal ${\mathfrak{M}}_U$.  The fact that the intersection of the $V_\alpha$ with $V_1$ and $V_2$ is $k$ implies  $A = A(Z)$, where $Z$ is union of the set $\{W_1,W_2,U\}$ with the set of the $W_\alpha$.   
Since the set that contains all the $V_\alpha$ and  $V_1$ and $V_2$ has the property that each nonzero element of $k(x,y)$ is a unit in all but finitely many of these valuation rings, it follows that $U$ is the unique patch limit point of the set $Z$. Thus $Z$ is a patch closed representation of $A$ that is properly contained in $\Val(A)$. 
\end{example}

The Zariski-Riemann space $\Zar(A)$ of a domain $A$ admits not only the Zariski and patch topologies  but also a topology that is dual to the Zariski topology, the {\it inverse topology} whose basic closed sets are  the Zariksi quasicompact  open sets of $\Zar(A)$. Inverse closed sets can be described similarly to how  Zariski closed sets are described in (2.1): A set $Z$ in $\Zar(A)$ is inverse closed if and only if there is a patch closed set $Y$ in $\Zar(A)$ such that
\begin{center} $Z = \{V \in \Zar(A):U \subseteq V$ for some $U$ in $Y\}$.
\end{center}
The domain $A$ is {\it vacant} if the only  inverse closed representation of $A$ in $\Zar(A)$ is $\Zar(A)$ itself. See \cite{MR2747239} for additional characterizations of vacant domains. Intuitively, a vacant domain is a  domain with ``few'' valuation overrings. By contrast, integrally 
Noetherian domains of dimension greater than one are from being vacant. 

\begin{corollary} \label{vacant cor} Every integrally closed local overring of $D$ of Krull dimension one is vacant. 
\end{corollary}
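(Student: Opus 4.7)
The plan is to deduce Corollary~\ref{vacant cor} directly from Theorem~\ref{pre vacant}, bridging the gap between $\Zar(A)$ (the space on which vacancy is defined) and $\Val(A)$ (the space featured in Theorem~\ref{pre vacant}) using the hypothesis $\dim A = 1$. The crucial observation I would establish first is that under this hypothesis $\Zar(A) = \Val(A) \cup \{F\}$: any valuation overring $V \neq F$ of $A$ has a nonzero maximal ideal whose center on $A$ is a nonzero prime, and since $A$ has the unique nonzero prime $\mathfrak{m}_A$, $V$ must dominate $A$.

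With this in hand, I would take an arbitrary inverse closed $Z \subseteq \Zar(A)$ with $A(Z) = A$ and, using the description recalled just before the corollary, write $Z = \{V \in \Zar(A) : U \subseteq V \text{ for some } U \in Y\}$ for some patch closed $Y \subseteq \Zar(A)$. First I would check that $A(Y) = A$: the inclusion $Y \subseteq Z$ gives $A(Z) \subseteq A(Y)$, and for each $V \in Z$ some $U \in Y$ satisfies $A(Y) \subseteq U \subseteq V$, giving the reverse inclusion. Next I would pass to $Y' := Y \cap \Val(A)$; this is patch closed in $\Val(A)$ because the intrinsic patch topology on $\Val(A)$ defined in~(2.2) coincides with the subspace patch topology inherited from $\Zar(A)$, as the basic clopen sets restrict identically. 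Moreover $A(Y') = A(Y) = A$, since $F$ is redundant in any intersection of subrings of $F$. Theorem~\ref{pre vacant} then forces $Y' = \Val(A)$, so $\Val(A) \subseteq Y \subseteq Z$.

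To finish, I would observe that every $V \in \Val(A)$ is contained in $F$, so the upward closure of $Y$ automatically contains $F$. Combined with $\Val(A) \subseteq Z$, this yields $Z \supseteq \Val(A) \cup \{F\} = \Zar(A)$, and hence $Z = \Zar(A)$, proving $A$ is vacant.

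There is no substantive obstacle here; the argument is essentially bookkeeping. The only step requiring any care is the handling of $F$: confirming that $\Zar(A)$ and $\Val(A)$ differ only by the trivial valuation ring $F$ (which needs the one-dimensionality of $A$) and that $F$ is topologically harmless, in the sense that it is redundant in intersections and is automatically captured by upward closures of any $V \in \Val(A)$.
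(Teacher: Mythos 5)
Your proof is correct and follows essentially the same route as the paper: both proofs hinge on the observation that $\dim A = 1$ forces $\Zar(A) = \Val(A) \cup \{F\}$, reduce to a patch closed representation of $A$ inside $\Val(A)$, and then apply Theorem~\ref{pre vacant}. The only cosmetic difference is that the paper works directly with $Z \cap \Val(A)$ (implicitly using that an inverse closed set is patch closed), while you route through the patch closed set $Y$ from the definition of inverse closed and verify $A(Y)=A(Z)$ by the sandwich argument; this sidesteps needing to know that inverse closed implies patch closed, but the underlying mechanism is the same.
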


\begin{proof} Let $A$ be an integrally closed local overring of $D$ of Krull dimension one, and let $Z$ be an inverse closed subset of $\Zar(A)$ such that $A = A(Z)$. Since $A$ has dimension one, every valuation overring of $A$ except the quotient field $F$ of $A$ dominates $A$.  Thus 
$Z \cap \Val(A) = Z \setminus \{F\}$ is a patch closed subset of $\Val(A)$ such that $A$ is the intersection of the rings in $Z \cap \Val(A)$. By Theorem~\ref{pre vacant},  
the only patch closed representation of $A$ in
$\Val(A)$ is $\Val(A)$ itself, so 
$Z \setminus \{F\} = Z \cap \Val(A) = \Val(A)$. 
Thus $Z = \Val(A) \cup \{F\} = \Zar(A)$, which 
 proves
$A$ is vacant. 
\end{proof}

We can now prove  Theorem~\ref{nullstellensatz} from the introduction: 
{\it  The mappings 
  $Z \mapsto A(Z)$ and $A \mapsto \Val(A)$ 
define  a one-to-one correspondence between Zariski closed connected subsets of $\Val(D)$ and  integrally closed local overrings of $D$ that are not essential valuation rings of $D$ or divisorial valuation rings of $D$.}

\medskip

\noindent {\it Proof of Theorem~\ref{nullstellensatz}.}
If $Z$ is a closed connected subset of $\Val(D)$, then by Theorem~\ref{VCT}, $A(Z)$ is an integrally closed local ring that dominates $D$ and is residually algebraic over $D$, and hence is not a divisorial valuation ring of $D$ or an essential valuation ring. 
Conversely, suppose $A$ is an integrally closed local overring of $D$ that is not an essential valuation ring or divisorial valuation ring of $D$. Then $A$  dominates $D$ since otherwise 
 $A$ dominates a localization of $D$ 
at a height-one prime ideal and hence is
an essential valuation ring of $D$, contrary to assumption. Also, $A$ is  
 residually algebraic over $D$ by (2.1). 
 Therefore, by Theorem~\ref{VCT},  $\Val(A)$ is a closed connected subset of $\Val(D)$.

It remains to show the correspondence in Theorem~\ref{nullstellensatz} is one-to-one. Let $Z$ be a Zariski closed connected subset of $\Val(D)$. 
Then $Z$ is a patch closed subset of $\Val(D)$,  $A(Z)$ is local and $A(Z)$ is residually algebraic over $D$. 
We claim every valuation ring in $Z$ dominates $A(Z)$ and hence that $Z \subseteq \Val(A(Z))$. 

Let $U \in Z$. If $U$ is residually algebraic over $D$, then since $$(D+{\mathfrak M}_U)/{\mathfrak M}_U \subseteq (A(Z)+ {\mathfrak M}_U)/{\mathfrak M}_U \subseteq U/{\mathfrak M}_U$$ and $U/{\mathfrak M}_U$ is algebraic over the residue field of $D$, it follows that $(A(Z)+ {\mathfrak M}_U)/{\mathfrak M}_U$
is a field and hence $U$ dominates $A(Z)$. 
Suppose instead that $U$ is residually transcendental over $D$. Then the maximal ideal of $U$ is the intersection of the maximal ideals of the valuation rings in $\Val(D)$ properly contained in $U$, and since $Z$ is a closed subset of $\Val(D)$, all of these valuation rings properly contained in $U$ are in $Z$. This implies the intersection of all the maximal ideals 
 of the valuation rings in $Z$ 
 is the same as the intersection of the maximal ideals in the valuation rings in $Z$  that are minimal in $Z$. As noted above, since the latter valuation rings are residually algebraic over $D$, they all dominate $A(Z)$. This implies $U$ dominates $A(Z)$ since the maximal ideal of $U$ is the intersection of the maximal ideals of these valuation rings, and this proves the claim that
 every valuation ring in $Z$ dominates $A(Z)$.  

 We have shown that $Z \subseteq \Val(A(Z))$.  Since $Z$ is Zariski closed in $\Val(D)$, hence patch closed in $\Val(D)$, Theorem~\ref{pre vacant} implies 
 $Z = \Val(A(Z))$. Also, if $A$ is an integrally closed local overring of $D$,  then $A = A(\Val(A))$ since $A$ is the intersection of the valuation rings that dominate it, so the correspondence in Theorem~\ref{nullstellensatz} is one-to-one. 
\qed

\medskip

An integral domain $R$ is a {\it Pr\"ufer domain} if  each 
valuation overring of $R$ is a localization of $R$ at a prime ideal. 
Thus the Pr\"ufer overrings of the two-dimensional regular local ring $D$ can be viewed as the other extreme from the case of interest in this article, that of the infinite sets of  valuation rings that intersect to a local ring. We close this section with a characterization of such overrings. For this, we require the notion of a {\it Krull-constructed overring} of $D$, a local ring that is the intersection of the valuation rings contained in a divisorial valuation ring  in $\Val(D)$. These overrings of $D$ are precisely the rings that occur as $A(Z)$ for an infinite irreducible closed set $Z$ in $\Val(D)$.

\begin{theorem} An integrally closed overring of $D$ is a Pr\"ufer domain if and only if it is not contained in a Krull-constructed overring of $D$. 
\end{theorem}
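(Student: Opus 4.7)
The plan is to prove both implications of the equivalence. Throughout, for a divisorial valuation ring $V \in \Val(D)$, write $A_V = \bigcap\{W \in \Val(D) : W \subseteq V\}$ for the associated Krull-constructed overring. First I would record an explicit description of $A_V$: combining~(2.3) with the pullback correspondence between two-dimensional valuation rings of $F$ specializing to $V$ and nontrivial $k$-valuation rings of $V/\mathfrak{m}_V$ (the latter being a finitely generated, transcendence-degree-one extension of $k = D/\mathfrak{m}_D$ by~(2.1)), one finds that $A_V$ is the preimage in $V$ of the algebraic closure of $k$ in $V/\mathfrak{m}_V$.

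For the forward direction, suppose $R$ is Pr\"ufer and $R \subseteq A_V$ for some divisorial $V$. Since $V \supseteq R$ is a valuation overring, the Pr\"ufer hypothesis forces $V = R_\mathfrak{p}$ for the height-one prime $\mathfrak{p} = \mathfrak{m}_V \cap R$, and $V/\mathfrak{m}_V$ is the fraction field of $R/\mathfrak{p}$. Because $R \subseteq A_V$ is contained in every two-dimensional $W \in \Val(D)$ with $W \subseteq V$, the image of $R/\mathfrak{p}$ in $V/\mathfrak{m}_V$ lies inside every nontrivial $k$-valuation ring of $V/\mathfrak{m}_V$, and hence inside the algebraic closure of $k$ there. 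This forces the fraction field $V/\mathfrak{m}_V$ to be algebraic over $k$, contradicting divisoriality of $V$.

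For the converse, suppose $R$ is not Pr\"ufer and fix a prime $\mathfrak{p}$ of $R$ with $R_\mathfrak{p}$ not a valuation ring. A brief case analysis on $\mathfrak{p} \cap D$ shows $R_\mathfrak{p}$ must dominate $D$ (otherwise it would be an overring of a DVR or of a field, hence itself a valuation ring), so~(2.1) gives that $R_\mathfrak{p}$ is residually algebraic over $D$. By Theorem~\ref{nullstellensatz}, $\Val(R_\mathfrak{p})$ is a Zariski closed, connected, and necessarily infinite subset of $\Val(D)$. The key step is to exhibit a divisorial member of $\Val(R_\mathfrak{p})$: by~(2.1) and~(2.3), every non-divisorial $V \in \Val(D)$ satisfies $\overline{\{V\}} = \{V\}$, so if $\Val(R_\mathfrak{p})$ contained no divisorial valuation ring, its subspace Zariski topology would be discrete, contradicting that it is connected and infinite. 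Having chosen a divisorial $V \in \Val(R_\mathfrak{p})$, the explicit description of $A_V$ combined with residue algebraicity of $R_\mathfrak{p}$ over $D$ yields $R \subseteq R_\mathfrak{p} \subseteq A_V$.

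The hard part will be this key step of the converse. It depends on running the nullstellensatz (to pin down $\Val(R_\mathfrak{p})$ as Zariski closed and connected) together with the sharp dichotomy of~(2.3) (identifying divisorial valuation rings as precisely those admitting infinite Zariski-closure in $\Val(D)$); without both ingredients, the topological contradiction above cannot be drawn.
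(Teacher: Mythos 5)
Your overall strategy for the converse is sound, but the ``key step'' you flag has a real error in it, and I want to be precise about where.

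You argue: if $\Val(R_{\mathfrak p})$ contained no divisorial valuation ring, every point of it would be Zariski-closed (true, by the dimension/residue-field dichotomy in~(2.1) plus the description of Zariski closure in~(2.2)), and you then conclude the subspace Zariski topology ``would be discrete, contradicting that it is connected and infinite.'' That inference is false: a compact space with infinitely many closed points is never discrete, so ``discrete'' and ``infinite'' are already incompatible, independent of divisoriality. In fact $\Val(R_{\mathfrak p})$, being Zariski-closed in the spectral space $\Val(D)$, is itself spectral and compact; if every point were closed, the specialization order would be trivial, so its Zariski topology would \emph{coincide with its patch topology}, making it a compact Hausdorff zero-dimensional (Boolean) space --- hence \emph{totally disconnected}, not discrete. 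That weaker conclusion does give the contradiction you want (since $\Val(R_{\mathfrak p})$ has at least two members because $R_{\mathfrak p}$ is not a valuation ring), but you have not made that argument, and the one you did make is wrong. Relatedly, you assert $\Val(R_{\mathfrak p})$ is ``necessarily infinite'' as a consequence of Theorem~\ref{nullstellensatz}, but that theorem says nothing of the sort; infiniteness requires a separate argument (e.g.\ a finite intersection of valuation rings is a Pr\"ufer domain, and a local Pr\"ufer domain is a valuation ring), though as just noted it is also unnecessary once you have total disconnectedness.

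By contrast, the paper produces the needed divisorial valuation ring directly by Seidenberg's Lemma: any non-valuation local overring of $D$ that dominates $D$ is dominated by a divisorial valuation ring, full stop. That replaces your entire topological detour, and then the inclusion into the Krull-constructed ring follows as you say. Your forward direction is also correct but much heavier than necessary: the paper observes simply that overrings of Pr\"ufer domains are Pr\"ufer and that a Krull-constructed overring is local yet has no valuation overring as a localization, hence is not Pr\"ufer. Your pullback computation with the constant field of $V/\mathfrak m_V$ verifies the same thing, and does yield a nice explicit description of $A_V$, but it is more machinery than the statement requires. In short: the architecture of your proof is parallel to the paper's, the forward direction is correct though overbuilt, and the converse has a genuine gap at the discreteness claim that needs to be replaced by the spectral-space/patch-topology argument or, more simply, by a direct appeal to Seidenberg's Lemma.
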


\begin{proof} Let $A$ be an integrally closed overring of $D$. If $A$ is a Pr\"ufer domain, then $A$ is not contained in a Krull-constructed overring of $D$ since every overring of a Pr\"ufer domain is a Pr\"ufer domain and Krull-constructed overrings are not Pr\"ufer domains since they are local but none of their valuation overrings are localizations.

Conversely, suppose $A$ is not a Pr\"ufer domain, and let $M$ be a maximal ideal of $A$ such that $A_M$ is not a valuation domain. Seidenberg's Lemma \cite[Theorem~7]{MR54571} implies $A_M$ is dominated by a divisorial valuation ring $U$ of $D$. Since $A_M$ is not a valuation domain, (2.1) implies $A_M$ is residually algebraic over $D$, and so by Theorem~\ref{VCT}, $\Val(A_M)$ is a closed connected subset of $\Val(D)$. Thus the closure of the set $\{U\}$ in $\Val(D)$ is contained in $\Val(A_M)$, which implies 
 every rank two valuation ring contained in $U$ contains $A_M$. Consequently the Krull-constructed ring defined by $U$ contains $A_M$ and hence~$A$. 
 \end{proof}

\section{Questions}

The methods of this article are highly dependent on the fact that $D$ is a regular local ring of Krull dimension $2$.  
The following question is about whether the main theorem of the article, Theorem~\ref{nullstellensatz}, extends to higher dimensions. 

\begin{question} \label{7.1} Suppose $D$ is a regular local ring of Krull dimension greater than $2$.  Does there exist an integrally closed local overring $A$ of $D$ such that $A$ dominates $D$ and $A = A(Z)$ for some Zariski closed and disconnected subset $Z$ of $\Val(D)$? 
\end{question}

We do not know the answer to the question even if the assumption that $D$ is a regular local ring is replaced by the  assumption that $D$ is simply an integrally closed local domain of dimension more than $2$, Noetherian or otherwise.  One way to answer this question in the negative is to answer the following question in the positive, either by constructing the ring $A$ in Question~\ref{7.2} in general or, to answer Question~\ref{7.1} specifically, as an overring of a regular local ring of Krull dimension $>2$. 

\begin{question} \label{7.2}
Does there exist a local integral domain $A$ with quotient field $F$ and maximal ideal ${\mathfrak m}_A$ such that $A$ is not a valuation domain, $A= ({\mathfrak{m}}_A:_F{\mathfrak{m}}_A)$, and $A = R \cap V$, where $R$ is an intersection of valuation overrings dominating $A$ and $V$ is a valuation overring dominating $A$ such that $R \not \subseteq V$?
 \end{question}

A positive answer to Question~\ref{7.2} implies 
 there is a   closed and disconnected subset $Z$ of $\Val(A)$ such that $A = A(Z)$. To see this, 
note first that 
by replacing $V$ with a minimal valuation overring of $A$ contained in $V$, we may assume $\{V\}$ is a closed subset of $\Val(A)$.  
Let $Z$ be the set of valuation overrings of $R$ that are in $\Val(A)$.  
 By assumption, $A(Z)=R$. Let $\overline{Z}$ denote the Zariski closure of $Z$ in $\Val(A)$.   Since $A(\overline{Z}) \subseteq  R$, to prove the claim it suffices to show 
$A \ne A(\overline{Z})$, since this implies $V \not \in \overline{Z}$ and hence
$\overline{Z} \cup \{V\}$ is a   closed and disconnected set in $\Val(A)$ whose valuation rings intersect to $A$.    
By \cite[Lemma~3.1]{HLOT},  the intersection $J(Z)$ of the maximal ideals of the valuation rings in $Z$ is equal to  the intersection  $J(\overline{Z})$ of the maximal ideals of the valuation rings in $\overline{Z}$.  
Therefore, 
if $A({\overline{Z}}) = A$, then since each valuation ring in $\overline{Z}$ dominates $A$, it follows that $J(Z) = J( {\overline{Z}})$ is the maximal ideal ${\mathfrak{m}}_A$ of $A$.  
Thus   $R = A(Z) \subseteq ({\mathfrak m}_A:_F{\mathfrak m}_A) = A$, contrary to the assumption that $R \ne A$. This shows $A({\overline{Z}}) \ne A$, which proves the claim. 

\medskip

\printbibliography

\end{document}